\numberwithin{equation}{section}
\newtheorem{theorem}{Theorem}[section]
\newtheorem{lemma}[theorem]{Lemma}
\newtheorem{proposition}[theorem]{Proposition}
\newtheorem{example}[theorem]{Example}
\newtheorem{definition}[theorem]{Definition}
\newtheorem{remark}[theorem]{Remark}
\newcommand{\ep}{\varepsilon}
\newcommand{\tht}{\theta}
\newcommand{\ld}{\lambda}
\newcommand{\q}{\quad}
\newcommand{\wh}{\widehat}
\newcommand{\R}{\mathbb{R}}
\newcommand{\N}{\mathbb{N}}
\newcommand{\Z}{\mathbb{Z}}
\newcommand{\Prime}{\mathbb{P}}
\newcommand{\rd}{\mathbb R ^d}
\newcommand{\pn}{\par\noindent}
\newcommand{\mZ}{\mathcal{Z}}
\newcommand{\va}{{\Vec{a}}}
\newcommand{\vc}{{\Vec{c}}}
\newcommand{\vs}{{\Vec{s}}}
\newcommand{\vt}{{\Vec{t}}}
\newcommand{\vk}{{\Vec{k}}}
\newcommand{\vz}{{\Vec{0}}}
\newcommand{\vsig}{{\Vec{\sigma}}}
\begin{document}
\title[Shintani zeta distributions]{Multidimensional Shintani zeta functions and zeta distributions on $\rd$}
\author[T.~ Aoyama and T.~Nakamura]{Takahiro Aoyama* \& Takashi Nakamura}
\address{Department of Mathematics Faculty of Science and Technology \\Tokyo University of Science Noda, CHIBA 278-8510 JAPAN}
\email{{\rm aoyama$\_$takahiro$@$ma.noda.tus.ac.jp, nakamura$\_$takashi$@$ma.noda.tus.ac.jp}}
\subjclass[2010]{Primary 60E, Secondary 11M}
\keywords{Euler product, Shintani zeta function, zeta distribution}
\maketitle

\begin{abstract}
The class of Riemann zeta distribution is one of the classical classes of probability distributions on $\R$.
Multidimensional Shintani zeta function is introduced and its definable probability distributions on $\rd$ are studied.
This class contains some fundamental probability distributions such as binomial and Poisson distributions.
The relation with multidimensional polynomial Euler product, which induces multidimensional infinitely divisible distributions on $\rd$, is also studied.
\end{abstract}

\section{Introduction}
\subsection{Probability distributions}

There exist many classes of probability distributions such as normal and exponential distributions.
The Fourier transforms of probability distributions are usually called characteristic functions in probability theory.
In the study of probabilistic limit theorems and stochastic processes, analytic methods often appear by treating them.

\vskip3mm

Let $\mu$ be a probability distribution on $\rd$ and its characteristic function $\wh{\mu}(t):=\int_{\rd}e^{{\rm i}\langle t,x\rangle}$ $\mu (dx),\, t\in\rd,$
where $\langle\cdot ,\cdot\rangle$ is the inner product.
In the following, we give some examples of characteristic functions.

\begin{example}\label{ex:CH}
\pn
$(i)$ $($Delta measure.$)$
Let $\mu_{\va} (=\delta_{\va})$ be a delta measure at $\va\in\rd$, then
\begin{equation}\label{eq:delta}
\wh\mu_{\va}(\vt)=\exp{{\rm{i}}\langle\va, \vt\rangle},\q \vt\in\rd.
\end{equation}
\pn
$(ii)$ $($Binomial distribution.$)$ 
Let $d=1$ and $\mu_{Bn}$ be a binomial distribution with parameter $K\in\N$.
Then,
\begin{equation}\label{eq:bi}
\mu_{Bn}(t)=(pe^{{\rm{i}}t}+q)^K,\q t\in\R,
\end{equation}
where $p,q>0$ with $p+q=1$.
\pn
$(iii)$ $($Compound Poisson distribution.$)$
Let $\mu_{{\rm CPo}}$ be a compound Poisson distribution.
Then there exist some $c>0$ and $\rho$, a distribution on $\rd$ with $\rho (\{0\})=0$, such that
\begin{equation}\label{CPcf}
\wh\mu_{{\rm CPo}}(\vt)=\exp \left(c\left(\wh\rho (\vt) -1\right)\right), \q \vt\in\rd. 
\end{equation}
The Poisson distribution is a special case when $d=1$ and $\rho =\delta_1$.
\pn
\end{example}

There is another class of distribution which is defined as follows.

\begin{definition}[Infinitely divisible distribution]
A probability measure $\mu$ on $\rd$ is infinitely divisible if,
for any positive integer $n$, there is a probability measure $\mu_n$ on 
$\rd$
such that
\begin{equation*}
\mu=\mu_n^{n*},
\end{equation*}
where $\mu_n^{n*}$ is the $n$-fold convolution of $\mu_n$.
\end{definition}

This class is known as one of the most important class of distributions in probability theory.
Infinitely divisible distributions are the marginal distributions of stochastic processes having independent and stationary increments 
such as Brownian motion and Poisson processes. 
In 1930's, such stochastic processes were well-studied by P. L\'evy and now we usually call them L\'evy processes.
We can find the detail of L\'evy processes in \cite{S99}.

\subsection{Riemann and Hurwitz zeta functions}

Zeta functions play one of the key roles in number theory. 
The Riemann zeta function is regarded as the prototype. 
First results about this function were obtained by L.~Euler in the eighteenth century. 
It is named after B.~Riemann, who in the memoir \lq\lq On the Number of Primes Less Than a Given Magnitude", 
published in 1859, established a relation between its zeros and the distribution of prime numbers.
The definition of the Riemann zeta function is as follows.

\begin{definition}[Riemann zeta function (see, e.g.\,\cite{Apo})]
The Riemann zeta function is a function of a complex variable $s = \sigma + {\rm i}t$, for $\sigma >1$ given by
\begin{align}
\zeta (s) :=& \sum_{n=1}^{\infty} \frac{1}{n^s}\label{eq:ds}\\
 =& \prod_p \Bigl( 1 - \frac{1}{p^s} \Bigr)^{-1} ,\label{eq:eupro}
\end{align}
where the letter $p$ is a prime number,  and the product of $\prod_p$ is taken over all primes.
\end{definition}
It is well-known that the right-hand side of \eqref{eq:ds} is called the Dirichlet series and \eqref{eq:eupro} the Euler product.
The Dirichlet series and the Euler product of $\zeta (s)$ converges absolutely in the half-plane $\sigma >1$ and uniformly in each compact subset of this half-plane. 

By partial summation, we have
$$
\zeta (s) = \sum_{n \le N} \frac{1}{n^s} + \frac{N^{1-s}}{s-1} + s \int_N^{\infty} \frac{[x] -x}{x^{s+1}} dx,
$$
where the sequel $[x]$ denotes the maximal integer less than or equal to $x$. 
The above formula gives the analytic continuation for $\zeta (s)$ to the half-plane $\sigma > 0$ with a simple pole at $s=1$ with residue $1$. 

Next we introduce Dirichlet characters and the Dirichlet $L$-functions. 
Let $q$ be a positive integer. 
A Dirichlet character $\chi$ mod $q$ is a non-vanishing group homomorphism from the group $({\mathbb{Z}}/q{\mathbb{Z}})^*$ of prime residue classes modulo $q$ to ${\mathbb{C}}$. 
The character which is identically one is denoted by $\chi_0$ and is called the principal. 
By setting $\chi (n) = \chi (a)$ for $n \equiv a \mod q$, we can extend the character to a completely multiplicative arithmetic function on ${\mathbb{Z}}$. 
\begin{definition}[Dirichlet $L$-function (see, e.g.\,\cite{Apo})]
For $\sigma >1$, the Dirichlet $L$-function $L(s,\chi)$ attached to a character $\chi$ mod $q$ is given by
\begin{equation}
L (s,\chi) := \sum_{n=1}^{\infty} \frac{\chi (n)}{n^s} = \prod_p \Bigl( 1 - \frac{\chi(p)}{p^s} \Bigr)^{-1} .
\end{equation}
\end{definition}
The Riemann zeta function $\zeta (s)$ may be regarded as the Dirichlet $L$-function to the principal character $\chi_0 \mod 1$. 
It is possible that for values of $n$ coprime with $q$ the character $\chi (n)$ may have a period less than $q$. 
If so, we say that $\chi$ is imprimitive, and otherwise primitive. 
Every non-principal imprimitive character is induced by a primitive character. 
Two characters are non-equivalent if they are not induced by the same character. 
Characters to a common modulus are pairwise non-equivalent. 

It is well-known that if $\chi$ is a non-principal Dirichlet character, the Dirichlet series of $L(s , \chi )$ converges for $\sigma >0$ according to Abel's partial summation. 
We can show that $L(s,\chi)$ is continued analytically to ${\mathbb{C}}$, similarly as the case of the Riemann zeta function, and regular at $s=1$ if and only if $\chi$ is non-principal by partial summation. 
Furthermore, Dirichlet $L$-functions to primitive characters satisfy a functional equation of the Riemann-type. \\

As one of a generalization of $\zeta (s)$, the following function is also well-known.
\begin{definition}[Hurwitz zeta function (see, e.g.\,\cite{Apo})]\label{def:ler}
For $0 < u \le 1$ and $\sigma >1$, the Hurwitz zeta function $\zeta(s,u)$ is defined by
\begin{equation}
\zeta (s,u) := \sum_{n=0}^{\infty} \frac{1}{(n+u)^s}.
\end{equation}
\end{definition}
Note that we obviously have $\zeta (s) = \zeta (s,1)$. 
The function $\zeta (s,u)$ is analytically continuable to the whole complex plane as a meromorphic function with a simple pole at $s=1$. 

\subsection{Riemann and Hurwitz zeta distributions}

In probability theory, there exists a class of distribution on $\R$ which is generated by the Riemann zeta function.
First it appears in \cite{Khi} and we can also find it in \cite{GK68}.
What we have known about this distribution is not many.
Few properties are noted in \cite{GK68} and further ones are studied in \cite{Lin}.
Recently, a class of distribution on $\R$ generated by the Hurwitz zeta function is introduced and studied in \cite{Hu06}.
In this section, we mention the Riemann and Hurwitz zeta distributions with some known properties.

Put
\begin{equation*}
f_{\sigma}(t):=\frac{\zeta (\sigma +{\rm i}t)}{\zeta (\sigma)}, \q t\in\R,
\end{equation*}
then $f_{\sigma}(t)$ is known to be a characteristic function. 
(See, e.g.\,\cite{GK68}.)

\begin{definition}[Riemann zeta distribution on $\R$]
A distribution $\mu_{\sigma}$ on $\R$ is said to be a Riemann zeta distribution with parameter $\sigma$ if it has $f_{\sigma}(t)$ as its characteristic function.
\end{definition}

The Riemann zeta distribution is known to be infinitely divisible.
Its L\'evy measure is given of the form as in the following.

\begin{proposition}[See, e.g.\,\cite{GK68}]\label{pro:RD}
Let $\mu_{\sigma}$ be a Riemann zeta distribution on $\R$ with characteristic function $f_{\sigma}(t)$.
Then, $\mu_{\sigma}$ is compound Poisson on $\R$ and
\begin{align*}
\log f_{\sigma}(t)&=
\sum_{p}\sum_{r=1}^{\infty}\frac{p^{-r\sigma}}{r}\left(e^{-{\rm i}rt\log p}-1\right)\\
&=\int_0^{\infty}\left(e^{-{\rm i}tx}-1\right)N_{\sigma}(dx),
\end{align*}
where $N_{\sigma}$ is given by
\begin{align*}
N_{\sigma}(dx)=\sum_{p}\sum_{r=1}^{\infty}\frac{p^{-r\sigma}}{r}\delta_{r\log p}(dx),
\end{align*}
where $\delta_x$ is the delta measure at $x$.
\end{proposition}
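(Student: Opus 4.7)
The plan is to work directly from the Euler product \eqref{eq:eupro}. Since that product converges absolutely in the half-plane $\sigma>1$, for $s=\sigma+{\rm i}t$ with $\sigma>1$ I can take the principal branch of the logarithm of each factor and expand via $-\log(1-z)=\sum_{r=1}^{\infty}z^r/r$ (with $z=p^{-s}$, so $|z|\le 2^{-\sigma}<1$) to obtain
\begin{equation*}
\log\zeta(\sigma+{\rm i}t)=\sum_{p}\sum_{r=1}^{\infty}\frac{p^{-r\sigma}e^{-{\rm i}rt\log p}}{r}.
\end{equation*}
Specializing at $t=0$ gives $\log\zeta(\sigma)=\sum_p\sum_{r\ge 1}p^{-r\sigma}/r$, and subtracting yields the first displayed identity of the proposition.

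Next, I would reinterpret the resulting double series as an integral against the finite positive measure $N_\sigma := \sum_p\sum_{r\ge 1}(p^{-r\sigma}/r)\,\delta_{r\log p}$ defined in the statement. Since $N_\sigma$ is supported in $(0,\infty)$ and has total mass $c:=\log\zeta(\sigma)<\infty$, rewriting the double sum as $\int_0^\infty(e^{-{\rm i}tx}-1)\,N_\sigma(dx)$ produces the second displayed identity at once.

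To conclude that $\mu_\sigma$ is compound Poisson in the sense of \eqref{CPcf}, I would let $\rho$ be the pushforward of $c^{-1}N_\sigma$ under the reflection $x\mapsto -x$. Then $\rho$ is a probability distribution on $\R$ with $\rho(\{0\})=0$, and a change of variable in the integral converts the expression into
\begin{equation*}
\log f_{\sigma}(t)=c\bigl(\wh\rho(t)-1\bigr),
\end{equation*}
which is exactly the compound Poisson characteristic function in \eqref{CPcf} for $d=1$.

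The only technical point to check is the justification of the termwise logarithm, the expansion of $-\log(1-z)$, and the interchange of summation over primes $p$ and exponents $r$; all three are immediate from the absolute convergence $\sum_p\sum_{r\ge 1}p^{-r\sigma}/r=\log\zeta(\sigma)<\infty$ for $\sigma>1$. Consequently I do not anticipate any genuine obstacle, and the proof should be essentially a clean bookkeeping argument built on the Euler product.
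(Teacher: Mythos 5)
Your argument is correct and is precisely the classical derivation that the paper itself omits, citing \cite{GK68}: expand $\log$ of the Euler product \eqref{eq:eupro} termwise for $\sigma>1$, subtract the value at $t=0$, read the double sum as an integral against the finite measure $N_\sigma$, and normalize (after reflecting $x\mapsto -x$) to match the compound Poisson form \eqref{CPcf} with $c=\log\zeta(\sigma)>0$. All the convergence and branch-of-logarithm issues are handled exactly as you indicate by the absolute convergence of $\sum_p\sum_{r\ge1}p^{-r\sigma}/r$, so there is nothing to add.
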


Next we mention the Hurwitz zeta distribution.
Put the corresponding normalized function and a discrete one-sided random variable $X_{\sigma,u}$ as follows:
\begin{equation*}
f_{\sigma,u}(t):=\frac{\zeta (\sigma +{\rm i}t,u)}{\zeta (\sigma,u)}, \q t\in\R,
\end{equation*}
and
\begin{equation*}
{\rm Pr}\left(X_{\sigma,u}=\log (n+u)\right)=\frac{(n+u)^{-\sigma}}{\zeta (\sigma,u)}\q{\rm for} \,\,n\in\N\cup\{0\}.
\end{equation*}

Then $f_{\sigma,u}$ is known to be a characteristic function of $-X_{\sigma,u}$.

\begin{proposition}[{\cite[Theorem 1]{Hu06}}]
$(i)$ The Laplace-Stieltjes transform of $X_{\sigma,u}$ is $\Psi_{\sigma,u}(s)=\zeta(\sigma +s,u)/\zeta(\sigma,u)$, $s>1-\sigma$.\\
$(ii)$ The characteristic function of $-X_{\sigma,u}$ is $f_{\sigma,u}$.
\end{proposition}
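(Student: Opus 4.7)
The proof is a direct calculation from the definition of $X_{\sigma,u}$, and the only subtlety is the region of convergence.

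For part (i), the plan is to compute the Laplace--Stieltjes transform directly from the probability mass function. Since $X_{\sigma,u}$ takes the value $\log(n+u)$ with probability $(n+u)^{-\sigma}/\zeta(\sigma,u)$, I would write
\begin{equation*}
\Psi_{\sigma,u}(s) = E[e^{-sX_{\sigma,u}}] = \sum_{n=0}^{\infty} e^{-s\log(n+u)}\,\frac{(n+u)^{-\sigma}}{\zeta(\sigma,u)} = \frac{1}{\zeta(\sigma,u)}\sum_{n=0}^{\infty}(n+u)^{-(\sigma+s)},
\end{equation*}
and then identify the right-hand sum as $\zeta(\sigma+s,u)$ via Definition \ref{def:ler}. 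The condition $s>1-\sigma$ is precisely what makes $\sigma+s>1$, which is the half-plane of absolute convergence of the Hurwitz series; this justifies the term-by-term manipulation.

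For part (ii), the idea is to deduce the characteristic function of $-X_{\sigma,u}$ by formally substituting $s={\rm i}t$ in the Laplace--Stieltjes transform computed in (i). Concretely, I would write
\begin{equation*}
E[e^{{\rm i}t(-X_{\sigma,u})}] = E[e^{-{\rm i}t X_{\sigma,u}}] = \sum_{n=0}^{\infty}(n+u)^{-{\rm i}t}\,\frac{(n+u)^{-\sigma}}{\zeta(\sigma,u)} = \frac{\zeta(\sigma+{\rm i}t,u)}{\zeta(\sigma,u)} = f_{\sigma,u}(t).
\end{equation*}
The interchange of sum and expectation is legitimate because $|(n+u)^{-{\rm i}t}|=1$, so the series is dominated in absolute value by $\sum (n+u)^{-\sigma}/\zeta(\sigma,u)=1$; in particular the Hurwitz series evaluated on the vertical line $\sigma+{\rm i}t$ converges absolutely for $\sigma>1$.

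There is essentially no obstacle: the only point requiring care is distinguishing the half-plane $s>1-\sigma$ of the real Laplace transform from the vertical line $s={\rm i}t$ used for the characteristic function, and observing that both are covered by the absolute convergence of $\sum (n+u)^{-\sigma}$ for $\sigma>1$. No analytic continuation of $\zeta(\cdot,u)$ beyond this half-plane is needed for the statement as given.
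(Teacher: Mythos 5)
Your computation is correct: both parts follow immediately from the definition of $X_{\sigma,u}$ and the absolute convergence of the Hurwitz series $\sum_{n\ge 0}(n+u)^{-(\sigma+s)}$ in the half-plane $\Re(\sigma+s)>1$, and your justification of the term-by-term manipulations is exactly the right (and only) point needing care. The paper itself states this proposition as a citation of \cite[Theorem 1]{Hu06} without reproducing a proof, and your argument is the standard direct verification that the cited result rests on, so there is nothing further to compare.
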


Therefore, we can define the following distribution.

\begin{definition}[Hurwitz zeta distribution on $\R$]
A distribution $\mu_{\sigma,u}$ on $\R$ is said to be a Hurwitz zeta distribution with parameter $(\sigma,u)$ 
if it has $f_{\sigma,u}$ as its characteristic function.
\end{definition}

The infinite divisibility of $\mu_{\sigma,u}$ is studied in \cite{Hu06}.

\begin{proposition}[{\cite[Theorem 3]{Hu06}}]\label{proHu1}
The Hurwitz zeta distribution $\mu_{\sigma,u}$ is infinitely divisible if and only if
\begin{equation*}
u=\frac{1}{2}\q \mbox{or} \q u=1.
\end{equation*}
\end{proposition}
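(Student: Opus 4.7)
The plan is to handle the two implications separately, exploiting the fact that $\zeta(s,u)$ admits an Euler product exactly at $u\in\{1/2,1\}$.

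For sufficiency, the case $u=1$ is already covered by Proposition~\ref{pro:RD}. For $u=1/2$, I would start from
\[
\zeta(s,1/2) \;=\; 2^s\sum_{m\text{ odd}} m^{-s} \;=\; 2^s\prod_{p\neq 2}\bigl(1-p^{-s}\bigr)^{-1},
\]
which rewrites $f_{\sigma,1/2}(t)$ as $e^{it\log 2}$ times a product of geometric-type factors $(1-p^{-\sigma})/(1-p^{-\sigma-it})$ over odd primes. Expanding each logarithm by $-\log(1-z)=\sum_{r\geq 1}z^r/r$ then puts $\log f_{\sigma,1/2}$ directly into L\'evy-Khintchine form,
\[
\log f_{\sigma,1/2}(t) \;=\; it\log 2 \,+\, \sum_{p\neq 2}\sum_{r\geq 1}\frac{p^{-r\sigma}}{r}\bigl(e^{-itr\log p}-1\bigr),
\]
with finite positive L\'evy measure, so $\mu_{\sigma,1/2}$ is a compound Poisson distribution with drift, hence infinitely divisible.

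For necessity, fix $u\in(0,1)\setminus\{1/2\}$. I would transfer to the non-negative variable $Z:=X_{\sigma,u}-\log u$, whose Laplace transform has the form $\Psi_Z(\lambda)=g(\sigma+\lambda)/g(\sigma)$ for $g(s):=u^s\zeta(s,u)=1+\sum_{n\geq 1}a_n^{-s}$ with $a_n:=(n+u)/u>1$. If $\mu_{\sigma,u}$ were infinitely divisible then $-\log\Psi_Z$ would be a Bernstein function and, by uniqueness of the L\'evy-Khintchine representation for subordinator laws, the formal expansion
\[
\log\frac{g(\sigma+\lambda)}{g(\sigma)} \;=\; \sum_{m}c_m\,m^{-\sigma}\bigl(m^{-\lambda}-1\bigr),\q c_m\;:=\;\sum_{k\geq 1}\frac{(-1)^{k+1}}{k}N_k(m),
\]
obtained by collecting the terms of $\log\bigl(1+\sum_{n\geq 1}a_n^{-s}\bigr)$ according to the value $m=\prod_i a_{n_i}$ (with $N_k(m)$ the number of ordered $k$-tuples achieving that product), would have to yield $c_m\geq 0$ for every $m$. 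The plan is to exhibit an $m_0$ with $c_{m_0}<0$ arising from multiple factorisations of $m_0$ in the multiplicative monoid generated by $\{a_n\}_{n\geq 1}$ --- a phenomenon that disappears precisely at $u\in\{1/2,1\}$, where that monoid equals $\N$ or the set of odd positive integers, each freely generated by its primes.

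The main obstacle is the necessity direction. Two points require care: (i) justifying that the formal Dirichlet-type expansion of $\log\Psi_Z$ actually determines the unique L\'evy measure of the L\'evy-Khintchine representation, which I would do by first working for $\sigma$ large enough that $\sum_n a_n^{-\sigma}<1$ (so every series converges absolutely) and then extending to all $\sigma>1$ by analyticity of $\Psi_Z$; (ii) producing a concrete $m_0$ with $c_{m_0}<0$ for every admissible $u$, which amounts to a short arithmetic argument. For irrational $u$, the equation $(n_1+u)(n_2+u)=u(m+u)$ has no integer solutions, so $m_0=a_1 a_1$ already contributes only through $k=2$ and gives $c_{m_0}=-1/2$; for rational $u=p/q\notin\{1/2,1\}$ one identifies two distinct factorisations inside $\{a_n\}$ whose net contribution is negative. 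Encoding this exceptional arithmetic of the values $u=1/2,1$ is where I expect the real work to lie.
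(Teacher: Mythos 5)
First, a point of reference: the paper does not prove this proposition at all --- it is imported verbatim from \cite{Hu06} (Theorem 3) --- so your attempt can only be judged on its own merits. Your sufficiency half is complete and correct: $u=1$ is Proposition \ref{pro:RD}, and for $u=1/2$ the identity $\zeta(s,1/2)=2^s\prod_{p>2}(1-p^{-s})^{-1}$ yields exactly the compound-Poisson-with-drift form you write, consistent with (indeed slightly more precise than) the L\'evy measure the paper quotes from \cite{Hu06}.

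The necessity half is the right strategy but has a genuine gap, exactly where you flag it. The framework --- uniqueness of the L\'evy--Khintchine representation forces every coefficient $c_m$ of the rearranged expansion of $\log\bigl(1+\sum_{n\ge1}a_n^{-s}\bigr)$ to be nonnegative --- is sound, and your $m_0=a_1^2$ computation actually covers more than you claim: for \emph{every} $u\in(0,1)$, rational or not, the equation $(n_1+u)(n_2+u)=(1+u)^2$ with $n_1,n_2\ge1$ forces $n_1=n_2=1$ (if $n_1+n_2>2$ then $n_1n_2-1=-u(n_1+n_2-2)<0$, impossible), while $a_1^2=a_m$ forces $m=1/u+2$; hence $c_{a_1^2}=N_1-\tfrac12=-\tfrac12<0$ whenever $1/u\notin\N$. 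What remains is precisely the family $u=1/k$, $k\ge3$, where $\{a_n\}_{n\ge0}=\{kn+1\}_{n\ge0}$ is an honest multiplicative monoid, $c_{a_1^2}=\tfrac12>0$, and your sketch offers no construction. This is not a formality --- it is the heart of the theorem --- and closing it needs an input you have not named, namely Dirichlet's theorem on primes in arithmetic progressions. Choose distinct primes $p\equiv q\equiv -1\pmod{k}$ (these exist and, since $k\ge 3$, do not lie in $\{kn+1\}$). Then $p^2,q^2,pq$ all lie in $\{kn+1\}$ and are irreducible there, the divisors of $m_0:=p^2q^2$ congruent to $1\bmod k$ and exceeding $1$ are exactly $p^2,q^2,pq,p^2q^2$, and so $N_1(m_0)=1$, $N_2(m_0)=3$ (the ordered pairs $(p^2,q^2),(q^2,p^2),(pq,pq)$), and $N_j(m_0)=0$ for $j\ge3$ since $j$ factors of degree $\ge2$ in $p,q$ cannot multiply to a degree-$4$ number; hence $c_{m_0}=1-\tfrac32=-\tfrac12<0$. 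Until you supply this (or an equivalent) argument, the ``only if'' direction is unproved.
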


The L\'evy measure of $\mu_{\sigma,\frac{1}{2}}$ is also given as follows.

\begin{proposition}[{\cite[Theorem 2]{Hu06}}]
The Hurwitz zeta distribution $\mu_{\sigma,\frac{1}{2}}$ is compound Poisson (infinitely divisible) 
with its L\'evy measure $N_{\sigma,\frac{1}{2}}$ given by
\begin{equation*}
N_{\sigma,\frac{1}{2}}(dx)=\sum_{p>2}\sum_{r=1}^{\infty}\frac{p^{-r\sigma}}{r}\delta_{r\log p}(dx),
\end{equation*}
where the first sum is taken over all odd primes $p$.
\end{proposition}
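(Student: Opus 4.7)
The plan is to reduce the case $u = 1/2$ to the Riemann case via the classical identity $\zeta(s,1/2) = (2^s - 1)\zeta(s)$, so that the argument of Proposition~\ref{pro:RD} can be reused essentially verbatim on the Euler product over odd primes. Splitting $\zeta(s) = \sum_{n \geq 1} n^{-s}$ into even and odd terms gives $\sum_{k \text{ odd}} k^{-s} = (1 - 2^{-s})\zeta(s)$, while $\zeta(s,1/2) = \sum_{n \geq 0}(n + 1/2)^{-s} = 2^s \sum_{k \text{ odd}} k^{-s}$, so that
$$\zeta(s,1/2) = 2^s(1 - 2^{-s})\zeta(s) = 2^s \prod_{p > 2}\bigl(1 - p^{-s}\bigr)^{-1},$$
where in the last step the Euler factor at $p = 2$ in $\zeta(s)$ cancels against $(1 - 2^{-s})$. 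This factorization is the key input.

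Next I would substitute into the characteristic function and expand. From the factorization,
$$f_{\sigma,1/2}(t) = \frac{\zeta(\sigma + {\rm i}t, 1/2)}{\zeta(\sigma,1/2)} = 2^{{\rm i}t}\prod_{p > 2}\frac{1 - p^{-\sigma}}{1 - p^{-\sigma - {\rm i}t}},$$
and taking logarithms and using $-\log(1 - z) = \sum_{r \geq 1} z^r/r$ (valid since $|p^{-\sigma - {\rm i}t}| = p^{-\sigma} < 1$ for $p \geq 3$ and $\sigma > 1$) yields
$$\log f_{\sigma,1/2}(t) = {\rm i}t \log 2 + \sum_{p > 2}\sum_{r=1}^\infty \frac{p^{-r\sigma}}{r}\bigl(e^{-{\rm i}rt\log p} - 1\bigr).$$
The interchange of the double sum is justified by absolute convergence, since $\sum_{p > 2}\sum_{r \geq 1} p^{-r\sigma}/r = -\sum_{p > 2}\log(1 - p^{-\sigma}) < \infty$ whenever $\sigma > 1$.

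Finally I would read off the L\'evy measure. The double sum equals $\int_0^\infty(e^{-{\rm i}tx} - 1)N_{\sigma,1/2}(dx)$ with $N_{\sigma,1/2}$ exactly as in the statement, and this is a finite positive measure, putting $\log f_{\sigma,1/2}$ into the L\'evy--Khintchine form of a compound Poisson characteristic function in the convention already used for Proposition~\ref{pro:RD}. The remaining term ${\rm i}t\log 2$ corresponds only to a deterministic translation by $\log 2$ and does not affect the L\'evy measure. I do not anticipate any substantive obstacle; the one point requiring attention is the bookkeeping around the factor $2^s$ in $\zeta(s,1/2)$, which simultaneously produces the disappearance of the $p = 2$ Euler factor and the drift $\log 2$ that separates the compound Poisson piece from the translation.
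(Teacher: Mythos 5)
The paper offers no proof of this proposition at all: it is quoted verbatim as Theorem 2 of the cited reference \cite{Hu06}, so there is no internal argument to compare yours against. Your derivation is correct and is the natural one: the identity $\zeta(s,1/2)=2^s(1-2^{-s})\zeta(s)=2^s\prod_{p>2}(1-p^{-s})^{-1}$ is right, the cancellation of the $p=2$ Euler factor is right, and the resulting expansion
$\log f_{\sigma,1/2}(t)={\rm i}t\log 2+\sum_{p>2}\sum_{r\ge 1}r^{-1}p^{-r\sigma}\bigl(e^{-{\rm i}rt\log p}-1\bigr)$
is exactly the Riemann-case computation of Proposition \ref{pro:RD} run over the odd primes, with the interchange of summations justified as you say.

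One point deserves emphasis rather than a parenthetical remark: the drift ${\rm i}t\log 2$ that your computation produces is genuinely there, and it means that $\mu_{\sigma,1/2}$ is a \emph{translate} of a compound Poisson law, not a compound Poisson law in the strict sense of \cite{S99} (a compound Poisson distribution must have an atom at the origin of mass $e^{-c}$, whereas $-X_{\sigma,1/2}$ takes the values $\log 2-\log(2n+1)$ and has its extreme atom at $\log 2$, not at $0$). So the statement as transcribed in this paper is slightly loose; what your argument actually proves is that $\mu_{\sigma,1/2}$ is infinitely divisible with L\'evy measure $N_{\sigma,\frac12}$ and deterministic shift $\log 2$, which is the honest form of the cited theorem. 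Your handling of this point is correct; just do not suppress the shift when asserting the conclusion.
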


\begin{remark}
We have to note that both the Riemann and Hurwitz zeta distributions are defined in the region of absolute convergence.
The parameter $\sigma$ is always larger than 1 not in the whole complex plane.
\end{remark}

\subsection{Shintani zeta function}
As a multiple sum version of Hurwitz zeta function, Barnes \cite{Ba} considered a multiple sum of the form
\begin{equation*}
\zeta_r (s, u \,|\, \Lambda) := \sum_{n_1 ,\ldots, n_r =0}^{\infty} (\lambda_1n_1+\cdots+\lambda_rn_r+u)^{-s},
\q \Re (s) >r \ge 2,
\end{equation*}
where $u$, $\lambda_1, \ldots , \lambda_r$ are complex numbers satisfying some conditions. 
Nowadays this function is called the Barnes $r$-tuple zeta function. 
Barnes proved that the function $\zeta_r (s , u \,|\, \Lambda)$ can be continued meromorphically to the whole $s$-plane and is holomorphic except for simple poles at $s= 1, \ldots, r$. 
Barnes defined the multiple gamma function by $\zeta_r (s, u \,|\, \Lambda)$ and studied its properties. 
Afterwards many mathematicians have studied properties of the Barnes multiple zeta functions (see, for example \cite[Section 1]{Masmz}). 

In order to study the Barnes multiple gamma function, the following generalized Barnes multiple zeta function is introduced.

\begin{equation}
\zeta_S (\vs) := \sum_{n_1 ,\ldots, n_r =0}^{\infty} 
\prod_{l=1}^m \bigl(\lambda_{l1}(n_1+u_1) 
+\cdots+\lambda_{lr}(n_r+u_r) \bigr)^{-s_l}.
\label{eq:Bardef}
\end{equation}

Note that the original motivation of Shintani's research lies in the problem of constructing class fields over algebraic number fields. 
Cassou-Nogu\`es (see, for example \cite{Ca}) who was inspired by Shintani's work, considered those multiple series of the form that the numerator of (\ref{eq:Bardef}) is multiplied by certain roots of unity with $s_1 = \cdots =s_m$. 
She proved its meromorphic continuation and gave applications to $L$-functions and $p$-adic $L$-functions of totally real number fields (see, a survey \cite[Section 2]{Masmz}). 
Imai \cite{Imai} and Hida \cite{Hida} considered this series and the series with more generalized characters in the numerator. 
In \cite[Lemma 2.4.1]{Hida}, it was showed that these multiple series converges absolutely and uniformly on any compact subset in the region $\Re (s_l) > r/m$ for all $1 \le l \le m$. 
Moreover, in \cite[Theorem 2.4.1]{Hida}, it was proved that they can be continued to the whole space ${\mathbb{C}}^m$ as a meromorphic function.

\subsection{Aim}

It is well-known that infinitely divisible characteristic functions do not have zeros.
In zeta cases, this property can give us information of zeros of zeta functions which is one of the most important subject in number theory.
Historically, there exist many probability distributions on $\rd$ and multiple zeta functions but we do not see useful zeta distributions on $\rd$.
The purpose of our recent work is to establish zeta distributions on $\rd$ as other well-known distributions and show properties of them including the relationship with number theory.
As a first generalization, in view of the series representations, we have introduced multidimensional Shintani zeta functions and corresponding zeta distributions on $\rd$ in \cite{AN11k}.
In \cite{AN12e}, in view of the Euler products, we also have introduced multidimensional polynomial Euler products as to define infinitely divisible 
zeta distributions on $\rd$, and necessary and sufficient conditions for some of those products to generate compound Poisson characteristic functions are given.

In this paper, adjusting to general number theory further, we reconstruct our previous story of \cite{AN11k} and give new results which could not be obtained in Section 2.
Some important examples of distributions and functions related to infinite divisibility, number theory and zeros of zeta functions are also given and considered.
In Section 3, the relation with multidimensional polynomial Euler products is shown and some important examples of zeta functions related to these new classes are studied as new results.

\section{Multidimensional Shintani zeta functions and zeta distributions on $\rd$}

\subsection{Multidimensional Shintani zeta function}

As we have mentioned in Section 1.3, the known zeta distributions are considered only the case on $\R$.
For a generalization of them to $\rd$-valued, we define a new multiple Shintani $L$-function. 

\begin{definition}[Multidimensional Shintani zeta function, $Z_S(\vs)$]\label{Def}
Let $d,m,r\in\N$, $\vs\in\mathbb{C}^d$ and $(n_1, \ldots , n_r)\in\mathbb{Z}_{\ge 0}^{r}$.
For $\ld_{lj}, u_j > 0$, $\vc_l \in {\mathbb{R}}^d$, where $1\le j\le r$ and $1\le l\le m$, and 
a function $\tht (n_1, \ldots , n_r)\in{\mathbb{C}}$
satisfying $|\tht (n_1, \ldots , n_r)| = O((n_1+ \cdots +n_r)^{\ep})$, for any $\ep >0$, 
we define a multidimensional Shintani zeta function given by
\begin{equation}
Z_S (\vs) := \sum_{n_1 ,\ldots, n_r =0}^{\infty} 
\frac{\tht (n_1,\ldots , n_r)}{\prod_{l=1}^m (\lambda_{l1}(n_1+u_1) 
+\cdots+\lambda_{lr}(n_r+u_r) )^{\langle \vc_l,\vs \rangle}}.
\label{eq:def2}
\end{equation}
\end{definition}

This is a multidimensional case of the Shintani multiple zeta functions, 
when the coefficient $\tht (n_1,\ldots , n_r)$ in \eqref{eq:def2} is a product of Dirichlet characters, 
considered by Hida \cite{Hida}. 

\begin{remark}
A similar definition is already introduced in \cite{AN11k}. 
It was defined for some fixed $\ep >0$ not for any $\ep >0$.
By following the general number theory, we renew our definition.
\end{remark}

Put 
$$
\vs:=\vsig +{\rm i}\vt, \q \vsig, \vt\in\rd.
$$
The absolute convergence of $Z_S (\vs)$ is also given as follows.

\begin{theorem}\label{th:sc}
The series defined by $\eqref{eq:def2}$ converges absolutely in the region $\min_{1\le l\le m}$ $\Re\langle \vc_l, \vs\rangle >r/m$. 
\end{theorem}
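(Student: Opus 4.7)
The plan is to bound each term of $Z_S(\vs)$ in modulus and then compare the resulting nonnegative series with a convergent multi-index Dirichlet-type sum. Since $\lambda_{lj}>0$ and $u_j>0$, each linear form
\begin{equation*}
A_l := \lambda_{l1}(n_1+u_1) + \cdots + \lambda_{lr}(n_r+u_r)
\end{equation*}
is a positive real number, so $\bigl|A_l^{\langle \vc_l,\vs\rangle}\bigr| = A_l^{\Re\langle\vc_l,\vs\rangle}=A_l^{\sigma_l}$, where I write $\sigma_l := \Re\langle\vc_l,\vs\rangle$. The modulus of the $(n_1,\ldots,n_r)$-th term of $Z_S(\vs)$ is thus $|\tht(n_1,\ldots,n_r)|\prod_{l=1}^m A_l^{-\sigma_l}$.

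Next I would estimate each $A_l$ from below by a single quantity depending only on $N := n_1+\cdots+n_r$. Setting $c_l := \min_{1\le j\le r}\lambda_{lj}>0$ and $B := N + u_1+\cdots+u_r$, one has $A_l\ge c_l B$. In the region of interest $\sigma_l > r/m > 0$ for every $l$, so $A_l^{-\sigma_l}\le c_l^{-\sigma_l} B^{-\sigma_l}$ and therefore
\begin{equation*}
\prod_{l=1}^m A_l^{-\sigma_l} \le C(\vs)\, B^{-(\sigma_1+\cdots+\sigma_m)}
\end{equation*}
for some constant $C(\vs)$ depending only on $\vs$ and the parameters. The hypothesis on $\tht$ yields $|\tht(n_1,\ldots,n_r)|\le C_\ep B^{\ep}$ for every $\ep>0$, so the sum of moduli is majorized by a constant times
\begin{equation*}
\sum_{n_1,\ldots,n_r\ge 0} B^{\,\ep-(\sigma_1+\cdots+\sigma_m)}.
\end{equation*}

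To close the argument I would use that the number of $(n_1,\ldots,n_r)\in\Z_{\ge 0}^r$ with $N=n$ equals $\binom{n+r-1}{r-1}=O(n^{r-1})$, so the dominating series converges whenever $\sigma_1+\cdots+\sigma_m-\ep > r$. The decisive step---and the only place where the specific threshold $r/m$ is really used---is summing the $m$ inequalities $\sigma_l > r/m$ to obtain $\sigma_1+\cdots+\sigma_m > r$, after which a sufficiently small choice of $\ep$ finishes the proof. I do not anticipate any real obstacle beyond recognising this passage: it is precisely what forces the $m$ independent lower bounds $A_l \ge c_l B$ to combine into an overall exponent of $B$ that beats the $r$-dimensional lattice count, making $r/m$ the correct critical value.
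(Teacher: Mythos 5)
Your argument is correct and follows essentially the same route as the paper: bound each term in modulus, reduce the product of linear forms to a power of the single quantity $n_1+\cdots+n_r+\text{const}$ via $\lambda_{lj}\ge\min\lambda_{lj}$, and exploit that $\min_l\Re\langle\vc_l,\vs\rangle>r/m$ forces $\sum_l\Re\langle\vc_l,\vs\rangle>r$, which beats the $r$-dimensional growth of the index set after choosing $\ep$ small. The only (immaterial) difference is that you finish by counting the $O(n^{r-1})$ lattice points on each shell $n_1+\cdots+n_r=n$, whereas the paper compares the sum with an $r$-fold integral.
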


\begin{proof}
Note that $r\in\N$ and $\sum_{l=1}^m\langle \vc_l,\vsig\rangle \ge r$.
Put $\lambda := \min \{ \lambda_{lj} \}>0$ and $u := \min \{u_j\}>0$. 
Obviously, we have 
$$
(\lambda_{l1}(n_1+u_1) +\cdots+\lambda_{lr}(n_r+u_r) )^{-1} \le
\lambda^{-1} (n_1+\cdots +n_r+ru )^{-1}. 
$$
Therefore, for any $0<\ep <\sum_{l=1}^m\langle \vc_l,\vsig\rangle -r$, it holds that
\begin{equation*}
\begin{split}
&\sum_{n_1 ,\ldots, n_r =0}^{\infty} \left|
\frac{\tht (n_1,\ldots , n_r)}{\prod_{l=1}^m (\lambda_{l1}(n_1+u_1) 
+\cdots+\lambda_{lr}(n_r+u_r) )^{\langle \vc_l,\vs \rangle}}\right|\\
\le & \sum_{n_1 ,\ldots, n_r =0}^{\infty}
\frac{\lambda^{-\sum_{l=1}^m\langle \vc_l,\vsig\rangle}(n_1 + \cdots + n_r +ru)^{\varepsilon}}{\prod_{l=1}^m
(n_1+\cdots+n_r+ru)^{\langle \vc_l,\vsig \rangle}}\\
= &\sum_{n_1 ,\ldots, n_r =0}^{\infty}
\frac{\lambda^{-\sum_{l=1}^m\langle \vc_l,\vsig\rangle}}{(n_1+\cdots+n_r+ru)^{-\varepsilon+\sum_{l=1}^m\langle \vc_l,\vsig\rangle}} \\
\le & \lambda^{-\sum_{l=1}^m\langle \vc_l,\vsig\rangle}\left((ru)^{\varepsilon-\sum_{l=1}^m\langle \vc_l,\vsig \rangle} +
\int_0^{\infty} \!\!\!\!...\! \int_0^{\infty} \frac{dx_1 \cdots
dx_r}{(x_1+\cdots+x_r+ru)^{-\varepsilon+\sum_{l=1}^m\langle \vc_l,\vsig \rangle}}\right)\\
\le & \lambda^{-\sum_{l=1}^m\langle \vc_l,\vsig\rangle}\left((ru)^{\varepsilon-\sum_{l=1}^m\langle \vc_l,\vsig \rangle} 
+C(ru)^{\ep -\sum_{l=1}^m\langle \vc_l,\vsig\rangle +r}\right)<\infty,
\end{split}
\end{equation*}
where
\begin{equation*}
C:=\left(\left(\sum_{l=1}^m\langle \vc_l,\vsig\rangle)-\ep -1\right)
\cdots\left(\sum_{l=1}^m\langle \vc_l,\vsig\rangle -\ep -r\right)\right)^{-1}>0.
\end{equation*}
Thus $Z_S (\vs)$ converges absolutely in the region $\min_{1\le l\le m}\Re \langle \vc_l,\vs \rangle > r/m$.
\end{proof}

Note that it is proved that $Z_S (\vs)$ can be continued to the whole space ${\mathbb{C}}^d$ as a meromorphic function when $d=m$, $c_1 = (1,0,\ldots,0),\dots ,c_m = (0,\ldots,0,1)$ and $\tht (n_1,\ldots , n_r)$ is a product of Dirichlet characters in \cite{Hida}. 

\vskip3mm

Let $\mZ_S$ be the set of all multidimensional Shintani zeta functions $Z_S$ and denote by ${\rm D}_Z\subset \mathbb{C}^d$ the region of absolute convergence of $Z_S\in\mZ_S$ given in Theorem \ref{th:sc}. 
The following is a new result which can not be obtained by our previous definition.

\begin{theorem}\label{th:de}
Let $\vk=(k_1,\dots ,k_d)\in\Z_{\ge 0}^d$ and $Z_S\in\mZ_S$. 
Then,
$$
Z_S^{(\vk)} \in {\mathcal{Z}}_S,
$$
where
$$
Z_S^{(\vk)} (\vs) := \frac{\partial^{k_1}}{\partial^{k_1} s_1} \cdots \frac{\partial^{k_d}}{\partial^{k_d} s_d} Z_S (\vs), \q \vs\in{\rm D}_Z.
$$
\end{theorem}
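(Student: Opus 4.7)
The plan is to differentiate $Z_S$ term-by-term inside its region of absolute convergence and repackage the extra factors into a new numerator $\wt\theta$, keeping the data $(\lambda_{lj},u_j,\vc_l)$ fixed so that the result is manifestly of Shintani form \eqref{eq:def2}.

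First I would compute formally. Setting $A_l(n_1,\ldots,n_r):=\lambda_{l1}(n_1+u_1)+\cdots+\lambda_{lr}(n_r+u_r)$, which is strictly positive since $\lambda_{lj},u_j>0$, and writing $A_l^{-\langle\vc_l,\vs\rangle}=\exp(-\langle\vc_l,\vs\rangle\log A_l)$, the fact that $\log A_l$ does not depend on $\vs$ combined with the product rule and a trivial induction on $k_j$ will yield
$$\frac{\partial^{k_1}}{\partial s_1^{k_1}}\cdots\frac{\partial^{k_d}}{\partial s_d^{k_d}}\prod_{l=1}^{m}A_l^{-\langle\vc_l,\vs\rangle}=\prod_{j=1}^{d}\Bigl(-\sum_{l=1}^{m}c_{l,j}\log A_l\Bigr)^{k_j}\prod_{l=1}^{m}A_l^{-\langle\vc_l,\vs\rangle}.$$
Setting
$$\wt\theta(n_1,\ldots,n_r):=\tht(n_1,\ldots,n_r)\prod_{j=1}^{d}\Bigl(-\sum_{l=1}^{m}c_{l,j}\log A_l(n_1,\ldots,n_r)\Bigr)^{k_j},$$
I would then recognize $Z_S^{(\vk)}$ as the series \eqref{eq:def2} built from $(\lambda_{lj},u_j,\vc_l,\wt\theta)$.

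The next step is to verify that $\wt\theta$ satisfies the growth condition of Definition \ref{Def}. From $A_l\le\Lambda(n_1+\cdots+n_r+U)$ with $\Lambda:=\max_{l,j}\lambda_{lj}$ and $U:=\sum_j u_j$, each $|\log A_l|$ is $O(\log(n_1+\cdots+n_r+2))$, so the logarithmic factor is $O\bigl((\log(n_1+\cdots+n_r+2))^{k_1+\cdots+k_d}\bigr)=O((n_1+\cdots+n_r)^{\ep'})$ for every $\ep'>0$. Combined with $|\tht|=O((n_1+\cdots+n_r)^{\ep''})$ for every $\ep''>0$, this will give $|\wt\theta|=O((n_1+\cdots+n_r)^{\ep})$ for every $\ep>0$. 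Applying Theorem \ref{th:sc} to $\wt\theta$ will then deliver absolute and locally uniform convergence of the differentiated series on $\{\vs:\min_l\Re\langle\vc_l,\vs\rangle>r/m\}$, and the Weierstrass M-test will legitimate the term-by-term differentiation, yielding $Z_S^{(\vk)}\in\mZ_S$.

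The main obstacle is precisely the absorption of these logarithmic factors: had the numerator bound in Definition \ref{Def} required only a single fixed $\ep>0$ (as in the original formulation of \cite{AN11k}), the class would fail to be closed under differentiation because $(\log N)^{|\vk|}$ consumes an arbitrarily small amount of polynomial growth. The ``for every $\ep>0$'' strengthening of the bound adopted in this paper is exactly what makes $\mZ_S$ stable under $\vk$-fold partial differentiation, which is why this closure statement is flagged as a new consequence of the renewed definition.
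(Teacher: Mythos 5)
Your proposal is correct and follows essentially the same route as the paper: differentiate term by term inside the region of absolute convergence, absorb the resulting factors $-\sum_{l}c_{lj}\log\bigl(\lambda_{l1}(n_1+u_1)+\cdots+\lambda_{lr}(n_r+u_r)\bigr)$ into a new numerator, and check that the ``for every $\ep>0$'' growth condition survives (the paper does one partial derivative at a time and inducts, whereas you write the $\vk$-fold derivative in closed form, but this is only a cosmetic difference). Your closing remark correctly identifies the same point the paper emphasizes, namely that the closure under differentiation is exactly what the strengthened growth condition buys.
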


\begin{proof}
The case when $\vk =\vz$ is trivial.
Let $\vs\in\mathbb{C}^d$ and put $\vc_l := (c_{l1}, \ldots, c_{ld})$.
Define a vector-valued function $T_h(n_1 ,\ldots, $ $n_r)$, where $(n_1,\ldots , n_r)\in\mathbb{Z}_{\ge 0}^{r}$ and $1 \le h \le d$, as follows:
$$
T_h(n_1 ,\ldots, n_r) := \frac{\partial}{\partial s_h} \tht (n_1,\ldots , n_r) \prod_{q=1}^m 
\left(\lambda_{q1}(n_1+u_1)+\cdots+\lambda_{qr}(n_r+u_r)\right)^{-\langle \vc_q,\vs \rangle}.
$$
Then we have, for $(n_1, \ldots , n_r)\in\mathbb{Z}_{\ge 0}^{r}$ and $1 \le h \le d$,
\begin{equation*}
\begin{split}
T_h(n_1 ,\ldots, n_r) =  \frac{\tht (n_1,\ldots , n_r)\sum_{q=1}^m (-c_{qh}) \log (\lambda_{q1}(n_1+u_1)+\cdots+\lambda_{qr}(n_r+u_r))}
{\prod_{l=1}^m(\lambda_{l1}(n_1+u_1) +\cdots+\lambda_{lr}(n_r+u_r) )^{\langle \vc_l,\vs \rangle}}. 
\end{split}
\end{equation*}
Put
\begin{equation}
\theta_h'(n_1,\ldots , n_r):=\sum_{q=1}^m (-c_{qh}) \log \bigl(\lambda_{q1}(n_1+u_1)+\cdots+\lambda_{qr}(n_r+u_r) \bigr).
\label{eq:th'}
\end{equation}
Obviously, we have, for any $\ep >0$,
\begin{equation*}
\begin{split}
\left|\theta_h'(n_1,\ldots , n_r)\right|&\le\sum_{q=1}^m \left| c_{qh} \log \bigl(\lambda_{q1}(n_1+u_1)+\cdots+\lambda_{qr}(n_r+u_r) \bigr) \right|\\
&\le \sum_{q=1}^m \bigl(\lambda_{q1}(n_1+u_1)+\cdots+\lambda_{qr}(n_r+u_r)\bigr) ^{\varepsilon}\\
&\le \left( \sum_{q=1}^m \bigl(\lambda_{q1}(n_1+u_1)+\cdots+\lambda_{qr}(n_r+u_r)\bigr) \right)^{m\varepsilon}
\end{split}
\end{equation*}
for sufficiently large $n_1, \ldots, n_r$. 
Thus $|\theta_h'(n_1,\ldots , n_r)|=O((n_1+ \cdots +n_r)^{m\ep})$.

By Theorem \ref{th:sc}, $Z_S(\vs)$ converges absolutely in ${\rm D}_Z$.
Therefore one has, for $\vs\in {\rm D}_Z$ and $1 \le h \le d$,
\begin{equation*}
\begin{split}
\frac{\partial}{\partial s_h} Z_S (\vs) &= \sum_{n_1 ,\ldots, n_r =0}^{\infty} T_h(n_1 ,\ldots, n_r)\\
&= \sum_{n_1 ,\ldots, n_r =0}^{\infty} \frac{\tht (n_1,\ldots , n_r)\theta_h'(n_1,\ldots , n_r)}
{\prod_{l=1}^m (\lambda_{l1}(n_1+u_1) +\cdots+\lambda_{lr}(n_r+u_r) )^{\langle \vc_l,\vs \rangle}}\\
&=\sum_{n_1 ,\ldots, n_r =0}^{\infty} \frac{\tht_h'' (n_1,\ldots , n_r)}
{\prod_{l=1}^m (\lambda_{l1}(n_1+u_1) +\cdots+\lambda_{lr}(n_r+u_r) )^{\langle \vc_l,\vs \rangle}},
\end{split}
\end{equation*}
where $\tht_h'' (n_1,\ldots , n_r):=\tht (n_1,\ldots , n_r)\theta_h'(n_1,\ldots , n_r)$ satisfying
\begin{equation}
|\tht_h'' (n_1,\ldots , n_r)|=|\tht_h (n_1,\ldots , n_r)\theta'(n_1,\ldots , n_r)|=O\left((n_1+ \cdots +n_r)^{(m+1)\ep}\right).
\label{eq:esth''}
\end{equation}

Hence we have $(\partial / \partial s_h) Z_S (\vs) \in {\mathcal{Z}_S}$ for $1\le h\le m$. 
Inductively, we also have $Z_S^{(\vk)} (\vs) \in {\mathcal{Z}_S}$ for any $\vk\in\Z_{\ge 0}^d$. 
This completes the proof.
\end{proof}

Some important examples of $Z_S(\vs)$ are the following.

\begin{example}
$(i)$ When $d=m=r=\lambda_{11} = u_1= c_1 =1$, $\tht(n)=-\log(n+1)$, we have 
\begin{equation}
Z_S (\vs) = - \sum_{n=1}^{\infty} \frac{\log n}{n^s} =  \zeta'(s), 
\label{eq:zedeir}
\end{equation}
the derivative of the Riemann zeta function which is contained in Theorem \ref{th:de}. \\
\pn
$(ii)$ When $d=m=r=\lambda_{11} = c_1 =1$ and $\tht(n)= e^{2\pi {\rm i} v n}$, where $v \in {\mathbb{R}}$, we have 
\begin{equation}
Z_S (\vs) =  \sum_{n=0}^{\infty} \frac{e^{2\pi {\rm i} v n}}{(n+u)^s}, 
\label{eq:lerchzeta}
\end{equation}
the Lerch zeta function which is a generalization of the Hurwitz zeta function. When $\theta (n) = q^n$, where $q$ is a complex number and $0<|q|<1$, then
\begin{equation}\label{eq:LT}
Z_S (\vs) =  \sum_{n=0}^{\infty} \frac{q^n}{(n+u)^s}, 
\end{equation}
the Lerch transcendent function.\\
\pn
$(iii)$ When $d=m=r$, $\lambda_{11} = \cdots = \lambda_{mr}=1$, $\vc_1 = (1,0,\ldots ,0), \ldots , \vc_m = (0,\ldots, 0,1)$, 
 $\tht (n_1, \ldots , n_m) = 1$, $n_1 > \cdots >n_r >0$, and $\tht (n_1, \ldots , n_m) = 0$, otherwise, then one has
\begin{equation}
\begin{split}
Z_S (\vs)&= \sum_{n_1 > \cdots >n_r >0}^{\infty} \frac{1}{(n_1+u_1)^{s_1}(n_2+u_2)^{s_2}\cdots (n_r+u_r)^{s_r}} \\
&= \sum_{n_1, \ldots , n_r = 1}^{\infty} 
\frac{1}{(n_1+\cdots +n_r+u_1)^{s_1} (n_2+\cdots +n_r+u_2)^{s_2}\cdots (n_r+u_r)^{s_r}} ,
\end{split}
\label{eq:ezhdef}
\end{equation}
the Euler-Zagier-Hurwitz type of multiple zeta function. 
\end{example}

\subsection{Shintani zeta distributions on $\rd$}

By following the history of zeta distributions on $\R$, we define a new probability distribution on $\rd$ generated by $Z_S$ and consider their infinite divisibility.

Let $\tht (n_1, \ldots , n_r)$ be a nonnegative or nonpositive definite function and again
write $\vc_l=(c_{l1},\dots,c_{ld}) \in {\mathbb{R}}^d$ in Definition \ref{Def}. 

\begin{definition}[Multidimensional Shintani zeta distribution]\label{DefSD}
For $(n_1,\dots , n_r)\in\mathbb{Z}_{\ge 0}^{r}$ and $\vsig$ satisfying 
$\min_{1\le l\le m}\langle \vc_l, \vsig\rangle >r/m$ as in Theorem \ref{th:sc}, 
we define a multidimensional Shintani zeta random variable $X_{\vsig}$ 
with probability distribution on $\rd$ given by
\begin{align*}
{\rm Pr} \Biggl(X_{\vsig}= \biggl(\,& -\sum_{l=1}^m c_{l1} \log \bigl(\lambda_{l1}(n_1+u_1) +\cdots+\lambda_{lr}(n_r+u_r)\bigr),\\
&\dots , -\sum_{l=1}^m c_{ld} \log \bigl(\lambda_{l1}(n_1+u_1) +\cdots+\lambda_{lr}(n_r+u_r)\bigr)\,\biggr) \Biggr)\\
=\ \ \ \ &\frac{\tht (n_1,\ldots , n_r)}{Z_S(\vsig)}
\prod_{l=1}^m \bigl(\lambda_{l1}(n_1+u_1) +\cdots+\lambda_{lr}(n_r+u_r) \bigr)^{-\langle \vc_l,\vsig \rangle}.
\end{align*}
\end{definition}

It is easy to see these distributions are probability distributions since
\begin{align*}
\frac{\tht (n_1,\ldots , n_r)}{Z_S(\vsig)} \prod_{l=1}^m \bigl(\lambda_{l1}(n_1+u_1) 
+\cdots+\lambda_{lr}(n_r+u_r) \bigr)^{-\langle \vc_l,\vsig \rangle} \ge 0
\end{align*}
for each $(n_1,\dots , n_r)\in\mathbb{Z}_{\ge 0}^{r}$ when $\tht (n_1, \ldots , n_r)$ is nonnegative or nonpositive definite, and 
\begin{align*}
\sum_{n_1 ,\ldots, n_r =0}^{\infty} \frac{\tht (n_1,\ldots , n_r)}{Z_S(\vsig)}
\prod_{l=1}^m \bigl(\lambda_{l1}(n_1+u_1) +\cdots+\lambda_{lr}(n_r+u_r) \bigr)^{-\langle \vc_l,\vsig \rangle}
=\frac{Z_S(\vsig)}{Z_S(\vsig)}=1
\end{align*}
by Theorem \ref{th:sc}.

The characteristic function of $X_{\vsig}$ is as follows.

\begin{theorem}\label{th:scf}
Let $X_{\vsig}$ be a multidimensional Shintani zeta random variable.
Then its characteristic function $f_{\vsig}$ is given by
\begin{align*}
f_{\vsig}(\vt)=\frac{Z_S(\vsig +{\rm i}\vt)}{Z_S(\vsig)}, \q \vt\in\rd.
\end{align*}
\end{theorem}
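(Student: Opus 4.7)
The plan is to compute $f_{\vsig}(\vt) = E[e^{{\rm i}\langle \vt, X_{\vsig}\rangle}]$ directly from the definition, summing $e^{{\rm i}\langle \vt, x\rangle}$ against the probability mass function of $X_{\vsig}$ given in Definition \ref{DefSD}. All the work consists of showing that the factors $e^{{\rm i}\langle \vt, x\rangle}$ combine with the weights $\prod_l L_l^{-\langle \vc_l, \vsig\rangle}$ (where I abbreviate $L_l = L_l(n_1,\ldots,n_r) := \lambda_{l1}(n_1+u_1)+\cdots+\lambda_{lr}(n_r+u_r)$) to produce exactly the summand of $Z_S(\vsig+{\rm i}\vt)$.

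First I would write the atom of $X_{\vsig}$ corresponding to $(n_1,\ldots,n_r)$ as $x = -\sum_{l=1}^m (\log L_l)\vc_l \in \rd$ and compute
\[
\langle \vt, x\rangle = -\sum_{l=1}^m \langle \vc_l, \vt\rangle \log L_l,
\]
so that
\[
e^{{\rm i}\langle \vt, x\rangle} = \prod_{l=1}^m L_l^{-{\rm i}\langle \vc_l, \vt\rangle}.
\]
Multiplying by the probability weight and using the identity $\langle \vc_l, \vsig\rangle + {\rm i}\langle \vc_l, \vt\rangle = \langle \vc_l, \vsig+{\rm i}\vt\rangle$ (which holds because the $\vc_l$ are real), the $(n_1,\ldots,n_r)$-term of the characteristic function becomes
\[
\frac{\tht(n_1,\ldots,n_r)}{Z_S(\vsig)}\prod_{l=1}^m L_l^{-\langle \vc_l, \vsig+{\rm i}\vt\rangle},
\]
which is precisely the $(n_1,\ldots,n_r)$-term of $Z_S(\vsig+{\rm i}\vt)/Z_S(\vsig)$.

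Second, I would justify the term-by-term manipulation and summation. Since $|L_l^{-{\rm i}\langle \vc_l, \vt\rangle}| = 1$, the absolute value of the general term of $\sum e^{{\rm i}\langle \vt, x\rangle}\,\mrm{Pr}(X_{\vsig}=x)$ coincides with the absolute value of the general term of $Z_S(\vsig)/Z_S(\vsig)$, which converges absolutely by Theorem \ref{th:sc} under the hypothesis $\min_{1\le l\le m}\langle \vc_l, \vsig\rangle > r/m$ imposed in Definition \ref{DefSD}. Hence the computation of $f_{\vsig}(\vt)$ as a sum is legitimate and equals $Z_S(\vsig+{\rm i}\vt)/Z_S(\vsig)$, completing the proof.

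There is essentially no obstacle: the only point requiring attention is the bookkeeping of the inner product over $\rd$, making sure that the $d$-fold sum $\sum_{j=1}^d t_j \sum_{l=1}^m c_{lj}\log L_l$ can be reorganized as $\sum_{l=1}^m \langle \vc_l, \vt\rangle \log L_l$. After that, the identity is purely formal, and absolute convergence of $Z_S$ at $\vsig$ (Theorem \ref{th:sc}) automatically gives absolute convergence of the characteristic-function series at $\vsig+{\rm i}\vt$ for every $\vt\in\rd$.
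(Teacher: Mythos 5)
Your proof is correct and follows essentially the same route as the paper's: a direct computation of $E[e^{{\rm i}\langle \vt, X_{\vsig}\rangle}]$ in which $e^{{\rm i}\langle \vt, x\rangle}$ at each atom combines with the weight $\prod_l L_l^{-\langle \vc_l,\vsig\rangle}$ to give the general term of $Z_S(\vsig+{\rm i}\vt)$, with absolute convergence supplied by Theorem \ref{th:sc}. The paper merely writes the two-line computation without spelling out the exponent bookkeeping that you make explicit.
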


\begin{proof}
By the definition, we have, for any $\vt\in\rd$,
\begin{align*}
f_{\vsig}(\vt)
&=\sum_{n_1 ,\ldots, n_r =0}^{\infty}e^{{\rm i}\langle \vt,X_{\vsig}\rangle}
\frac{\tht (n_1,\ldots , n_r)}{Z_S(\vsig)} \prod_{l=1}^m \bigl(\lambda_{l1}(n_1+u_1) +\cdots+\lambda_{lr}(n_r+u_r) \bigr)^{-\langle \vc_l,\vsig \rangle} \\
&=\frac{Z_S(\vsig +{\rm i}\vt)}{Z_S(\vsig)}.
\end{align*}
\end{proof}

This theorem shows that Definition \ref{DefSD} gives a new generalization of 
zeta distributions on $\R$ mentioned in Section 1.3 to $\rd$-valued.

\vskip3mm

In the following, we give some simple examples of probability distributions on $\R$ in this class.

\begin{example}\label{ex:simp}
Let $m=d=r=1$ and put $\ld_{11}=\ld$, $u_1=u$ and $c_1=c$, where $\ld, u>0$ and $c\in\R$, then Shintani zeta distribution contains the following distributions.
\begin{enumerate}
\item{}
Delta measure.
\item{}
Binomial distribution.
\item{}
Poisson distribution.
\end{enumerate}
\end{example}

These examples can be obtained as follows.

\begin{proof}[Proof of Example \ref{ex:simp}]
First we show $(i)$.
Let $\sigma >c^{-1}$, $\theta(0)=\theta\in\R$ and $\theta(n)=0$, $n \ge 1$.
Then we have, for any $t\in\R$,
$$
\frac{Z_S(\sigma +{\rm i}t)}{Z_S(\sigma)} = \frac{\theta (\ld u)^{-c(\sigma +{\rm i}t)}}{\theta (\ld u)^{-c\sigma}}
 = (\ld u)^{-{\rm i}ct} = e^{{\rm{i}}(-c\log (\ld u))t}.
$$
Hence we obtain a characteristic function of a delta measure at $-c\log (\ld u)$.

Next we show $(ii)$.
Let $j\in\N\setminus\{1\}$, $\ld =u=1$, $c=-\left(\log j\right)^{-1}$, $\sigma <-\log j$ and $\phi(j) >0$.
Put
$$
p = \frac{\phi(j) j^{\sigma /\log j}}{1 +\phi(j)j^{\sigma /\log j}}, \qquad q =1-p = \frac{1}{1+\phi(j) j^{\sigma /\log j}}.
$$
Then we have, for any $t\in\R$,
\begin{align}
\frac{1+\phi(j) j^{-c(\sigma+{\rm{i}}t)}}{1+\phi(j) j^{-c\sigma}}
= \frac{1+\phi(j) j^{-c\sigma}e^{-{\rm i}tc\log j}}{1+\phi(j) j^{-c\sigma}}=\frac{1+\phi(j) j^{\sigma /\log j}e^{{\rm i}t}}{1+\phi(j) j^{\sigma /\log j}}= pe^{{\rm{i}}t}+q.\label{eq:pq}
\end{align}

Let $K \in {\mathbb{N}}$, $\theta(j^k-1)= {}_KC_k \left(\phi(j)\right)^k$, $k\in \{0,1,\ldots, K\}$, and $\theta (n) =0$, otherwise.
Now consider a Shintani zeta function of the form
$$
Z_S(s)= \sum_{k=0}^{K}\frac{{}_KC_k \left(\phi(j)\right)^k}{ \left(j^k\right)^{cs}}=(1+\phi(j) j^{-cs})^K, \q s\in\mathbb{C}.
$$
By \eqref{eq:pq}, we have, for any $t\in\R$,
$$
\frac{Z_S(\sigma +{\rm i}t)}{Z_S(\sigma)}= \frac{\left(1+\phi(j) j^{-c(\sigma+{\rm{i}}t)}\right)^K}{\left(1+\phi(j) j^{-c\sigma}\right)^K}=(pe^{{\rm{i}}t}+q)^K.
$$
Hence we obtain a characteristic function of a binomial distribution with parameter $K$.

Finally, we show $(iii)$.
Let $a\in\R$, $j\in\N\setminus \{1\}$, $\ld =u=1$, $c=-(\log j)^{-1}$ and $\sigma <-\log j$.
Put $\theta(0)=1$, $\theta(j^k-1)= j^{ak}/k!$ and $\theta (n) =0$, otherwise.
Then we have, for any $t\in\R$,
$$
Z_S(\sigma +{\rm i}t) = \sum_{k=0}^\infty \frac{\left(j^{ak}\right)\left(j^k\right)^{-c(\sigma+{\rm{i}}t)}}{k!} = 
\sum_{k=0}^\infty \frac{\left(j^{a-c\sigma }\right)^k \left(e^{-{\rm{i}}ct\log j}\right)^k}{k!} = \exp \left( j^{a+\sigma /\log j} e^{{\rm{i}}t}\right) .
$$
Therefore one has
$$
\frac{Z_S(\sigma +{\rm i}t)}{Z_S(\sigma)} = 
\frac{\exp \left( j^{a+\sigma /\log j} e^{{\rm{i}}t}\right)}{\exp \left( j^{a+\sigma /\log j} \right)} =
\exp \left( j^{a+\sigma /\log j} (e^{{\rm{i}}t}-1) \right), \q t\in\R.
$$
Hence we obtain a characteristic function of a Poisson distribution with mean $j^{a+\sigma /\log j}>0$.
\end{proof}

\begin{remark}
Note that we have added some conditions for $\sigma$ in Example \ref{ex:simp} as to adjust to the definition of the Shintani zeta distribution.
However, these examples can be given under the condition with any $\sigma\in\R$ where the corresponding Shintani zeta functions convergence absolutely as well.
\end{remark}

\vskip3mm

For $\vk\in\Z_{\ge 0}^d$, put
$$
f_{\vsig}^{(\vk)}(\vt):=\frac{Z_S^{(\vk)}(\vsig +{\rm i}\vt)}{Z_S^{(\vk)}(\vsig)}, \q \vt\in\rd.
$$
Note that $Z_S^{(\vk)}\in\mathcal{Z}_S$ by Theorem \ref{th:de}.
Then, we have the following. 

\begin{theorem}
If $f_{\vsig}^{(\vz)}(\vt)$ is a characteristic function of a multidimensional Shintani zeta random variable $X_{\vsig}$, 
$\sum_{j=1}^r \lambda_{lr} u_r \ge 1$ and $c_{l1}, \ldots , c_{ld}$ have the same sign for each $1 \le l \le m$,
then $f_{\vsig}^{(\vk)}$ is also a characteristic function for any $\vk\in\Z_{\ge 0}^d$.
\end{theorem}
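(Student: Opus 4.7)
The plan is to realize $f_{\vsig}^{(\vk)}$ as the characteristic function of the multidimensional Shintani zeta random variable attached, in the sense of Definition \ref{DefSD}, to the function $Z_S^{(\vk)}$; its membership in $\mathcal{Z}_S$ is already supplied by Theorem \ref{th:de}, and Theorem \ref{th:scf} then identifies the resulting characteristic function with $Z_S^{(\vk)}(\vsig+\mathrm i\vt)/Z_S^{(\vk)}(\vsig)=f_{\vsig}^{(\vk)}(\vt)$. Hence the only additional point to verify is that the Dirichlet-type coefficient of $Z_S^{(\vk)}$ is either nonnegative- or nonpositive-valued on $\Z_{\ge 0}^{r}$.

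I would first iterate the one-step derivative identity contained inside the proof of Theorem \ref{th:de} to obtain the closed-form expansion
\begin{equation*}
Z_S^{(\vk)}(\vs)=\sum_{n_1,\ldots,n_r=0}^{\infty}\frac{\tht(n_1,\ldots,n_r)\,\prod_{h=1}^{d}\theta_h'(n_1,\ldots,n_r)^{k_h}}{\prod_{l=1}^{m}L_l(n_1,\ldots,n_r)^{\langle \vc_l,\vs\rangle}},
\end{equation*}
where $L_l(n_1,\ldots,n_r):=\lambda_{l1}(n_1+u_1)+\cdots+\lambda_{lr}(n_r+u_r)$ and $\theta_h'$ is the logarithmic weight defined in \eqref{eq:th'}. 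Because $\log L_q=O((n_1+\cdots+n_r)^{\varepsilon})$ for every $\varepsilon>0$, the new coefficient $\tilde\tht_{\vk}:=\tht\cdot\prod_{h=1}^{d}(\theta_h')^{k_h}$ continues to satisfy the polynomial growth required in Definition \ref{Def}; what remains is a sign analysis.

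The sign analysis proceeds in two steps. First, the hypothesis $\sum_{j=1}^{r}\lambda_{lj}u_j\ge 1$, together with $\lambda_{lj}>0$ and $n_j\ge 0$, gives $L_l(n_1,\ldots,n_r)\ge 1$, so $\log L_l(n_1,\ldots,n_r)\ge 0$ for every $l$ and every $(n_1,\ldots,n_r)\in\Z_{\ge 0}^{r}$. Second, the common-sign hypothesis on $c_{l1},\ldots,c_{ld}$ forces each summand $(-c_{qh})\log L_q$ in $\theta_h'(n_1,\ldots,n_r)=\sum_{q=1}^{m}(-c_{qh})\log L_q$ to have a sign governed only by the row-sign $\epsilon_q$ of $\vc_q$; assembling these contributions pins down a uniform sign $\epsilon_h'\in\{-1,+1\}$ for $\theta_h'(n_1,\ldots,n_r)$ that does not depend on $(n_1,\ldots,n_r)$. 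Consequently $\prod_{h=1}^{d}(\theta_h')^{k_h}$ carries the constant sign $\prod_{h=1}^{d}(\epsilon_h')^{k_h}$, and multiplying by the (by assumption) definitely signed $\tht$, the full coefficient $\tilde\tht_{\vk}$ is either nonnegative or nonpositive valued.

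All hypotheses of Definition \ref{DefSD} being met for $Z_S^{(\vk)}$, the associated multidimensional Shintani zeta random variable has characteristic function $f_{\vsig}^{(\vk)}$ by Theorem \ref{th:scf}, finishing the argument. I expect the main obstacle to be the second half of the sign analysis: one must align the row-signs of the vectors $\vc_l$ so that every summand $(-c_{qh})\log L_q$ contributes with a sign that is independent of $(n_1,\ldots,n_r)$, and it is only through this careful matching that the combined force of the two hypotheses yields a uniformly signed $\theta_h'$ and, in turn, a uniformly signed $\tilde\tht_{\vk}$.
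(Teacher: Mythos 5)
Your proposal is correct and follows essentially the same route as the paper: both reduce the claim to checking that the Dirichlet-type coefficient of $Z_S^{(\vk)}$, namely $\tht\cdot\prod_{h=1}^{d}(\theta_h')^{k_h}$, retains a definite sign --- using $\sum_{j}\lambda_{lj}u_j\ge 1$ to force $\log L_l\ge 0$ and the sign hypothesis on the $c_{lh}$ to fix the sign of $\theta_h'$ --- and then invoke Theorem \ref{th:scf}; the only difference is that you write the general-$\vk$ coefficient in closed form while the paper treats $\|\vk\|=1$ and inducts. The obstacle you flag at the end is genuine but is shared by the paper's own proof: for $\theta_h'=\sum_{q=1}^{m}(-c_{qh})\log L_q$ to have a sign independent of $(n_1,\ldots,n_r)$ one needs the row-signs to agree across all $q$, i.e.\ the hypothesis must be read as a single common sign for all entries of all the vectors $\vc_l$, which is exactly what the paper implicitly assumes when it asserts that $\theta_h'$ has ``the opposite sign of $c_{l1},\ldots,c_{ld}$.''
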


\begin{proof}
Let $X_{\vsig}$ be a multidimensional Shintani zeta random variable.
From (\ref{eq:th'}) in the proof of Theorem \ref{th:de}, the functions $\theta_h'(n_1,\ldots , n_r)$ and 
$\tht'' (n_1, \ldots , n_r) := \tht (n_1, \ldots , n_r)$ $\theta_h'(n_1,\ldots , n_r)$ are nonnegative or nonpositive definite when $\sum_{j=1}^r \lambda_{lr} u_r \ge 1$ and $c_{l1}, \ldots , c_{ld}$ have the same sign. 
Note that $\theta_h'(n_1,\ldots , n_r)$ have the opposite sign of 
$c_{l1}, \ldots , c_{ld}$ when $\sum_{j=1}^r \lambda_{lr} u_r \ge 1$.
Moreover, we have \eqref{eq:esth''}. 
Thus, $X''_{\vsig}$, replaced $\tht (n_1, \ldots , n_r)$ of $X_{\vsig}$ by $\tht'' (n_1, \ldots , n_r)$, is also a multidimensional Shintani zeta random variable.
Hence, by Theorem \ref{th:scf}, $f_{\vsig}^{(\vk)}$ is a characteristic function when $||\vk||=1$. 
Inductively, we also have the case $||\vk||>1$. 
\end{proof}

The following is well-known.

\begin{proposition}[See, e.g.\,\cite{S99}]\label{pro:Sato}
Let $\mu$ be a probability measure on $\rd$.\\
$(i)$ Let $n$ be a positive even integer. 
If $\wh\mu(\vt)$ is of class $C^n$ in a neighborhood of the origin, then $\mu$ has finite absolute moment of order $n$.\\ 
$(ii)$ If $\mu\in I(\rd)$, then $\wh\mu$ does not have zeros that is $\wh\mu (\vt)\neq 0$ for any $\vt\in\rd$.
\end{proposition}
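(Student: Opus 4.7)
The plan is to prove (i) and (ii) separately by standard characteristic-function techniques. For (i), I would first reduce the claim to showing finiteness of each axial moment $\int x_j^n\,\mu(dx)$ for $1 \le j \le d$, since for even $n$ the elementary bound $|x|^n \le d^{(n-2)/2}\sum_{j=1}^d x_j^n$ then yields $\int |x|^n\,\mu(dx) < \infty$. Next, I would induct on the even integer $n$. In the base case $n=2$, the second symmetric difference quotient
\[
\frac{\wh\mu(h\vec{e}_j)+\wh\mu(-h\vec{e}_j)-2}{h^2} \;=\; -2\int_{\rd}\frac{1-\cos(h x_j)}{h^2}\,\mu(dx)
\]
converges to $\partial^2\wh\mu/\partial t_j^2(\vz)$ by the $C^2$ hypothesis, and Fatou's lemma applied to the nonnegative integrand (which tends pointwise to $x_j^2/2$) yields $\int x_j^2\,\mu(dx) < \infty$. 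For the inductive step from $n-2$ to $n$, once the $(n-2)$-th absolute moment is known to be finite, dominated convergence permits differentiating under the integral sign $n-2$ times, giving $\partial^{n-2}\wh\mu/\partial t_j^{n-2}(\vt) = \mathrm{i}^{n-2}\int x_j^{n-2} e^{\mathrm{i}\la\vt, x\ra}\,\mu(dx)$; re-applying the base-case argument to this function at $\vt = \vz$ supplies the $n$-th moment bound.

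For (ii), the crucial observation is that infinite divisibility gives $\wh\mu(\vt) = (\wh\mu_n(\vt))^n$ for every $n$, hence $|\wh\mu_n(\vt)|^2 = |\wh\mu(\vt)|^{2/n}$ (taking the nonnegative real $n$-th root). As $n\to\infty$, the right-hand side converges pointwise to the indicator function of $\{\vt : \wh\mu(\vt) \ne 0\}$, which equals $1$ at the origin. Now each $|\wh\mu_n|^2$ is itself a characteristic function, namely of the symmetrization $\mu_n * \widetilde{\mu_n}$ where $\widetilde{\mu_n}$ denotes the reflected measure. I would then invoke L\'evy's continuity theorem: because $\wh\mu$ is continuous at $\vz$ with $\wh\mu(\vz) = 1$, the limiting function is continuous at $\vz$, so it is itself a characteristic function and in particular continuous on all of $\rd$. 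But a $\{0,1\}$-valued continuous function on the connected space $\rd$ is necessarily constant, and its value at the origin is $1$, so it is identically $1$. This forces $\wh\mu(\vt) \ne 0$ for every $\vt \in \rd$.

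I expect the main obstacle to lie in the inductive step of (i), where the interchange of differentiation and integration must be justified separately at each stage against the moment bound inherited from the previous step — in particular, one must produce a dominating function depending only on $|x_j|^{n-2}$ to pass to the limit inside the difference quotient that yields the $n$-th derivative. By contrast, (ii) is a soft argument once L\'evy's continuity theorem and the continuity of $\wh\mu$ at the origin are available, and the connectedness of $\rd$ closes the argument cleanly.
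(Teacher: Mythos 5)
Your proof is correct: part (i) is the standard second-symmetric-difference/Fatou argument with induction on even $n$, and part (ii) is the classical argument via $|\wh\mu_n|^2=|\wh\mu|^{2/n}$, L\'evy's continuity theorem, and connectedness of $\rd$. The paper gives no proof of this proposition --- it is quoted as a known result from Sato's book --- and your argument is essentially the one found there (the only point worth making explicit in (i) is that the inductive step still has a nonnegative integrand because $i^{n-2}$ is real and $x_j^{n-2}\ge 0$ for even $n-2$, which is exactly why the statement is restricted to even $n$).
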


Next we give the moment condition of the multidimensional Shintani zeta distribution.

\begin{theorem}\label{th:mo}
Let $k\in\N$ and $X_{\vsig}$ be a multidimensional Shintani zeta random variable, then we have
$$
E|X_{\vsig}|^{2k} <\infty.
$$
\end{theorem}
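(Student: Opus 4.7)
My plan is to deduce the moment bound from the smoothness of the characteristic function $f_{\vsig}$ combined with Proposition \ref{pro:Sato}(i). The crucial observation is that for any $\vt \in \rd$ one has $\Re \langle \vc_l, \vsig + {\rm i}\vt \rangle = \langle \vc_l, \vsig \rangle > r/m$, since $\vc_l$ and $\vsig$ are real vectors. Hence the complex argument $\vsig + {\rm i}\vt$ stays inside the domain of absolute convergence ${\rm D}_Z$ for every real $\vt$, and in particular $f_{\vsig}$ is well defined on all of $\rd$.

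Next, by iterating Theorem \ref{th:de}, every mixed partial derivative $Z_S^{(\vk)}$ is again a multidimensional Shintani zeta function in $\mathcal{Z}_S$, and hence admits an absolutely convergent series representation on ${\rm D}_Z$ by Theorem \ref{th:sc}. Viewing $Z_S$ as a function of the $2d$ real variables $(\Re \vs, \Im \vs)$, this shows $Z_S$ is of class $C^\infty$ on ${\rm D}_Z$, and the termwise differentiation carried out in the proof of Theorem \ref{th:de} is legitimate. Composing with the real-analytic map $\vt \mapsto \vsig + {\rm i}\vt$ and dividing by the nonzero constant $Z_S(\vsig)$, I conclude that $f_{\vsig} \in C^\infty(\rd)$; in particular it is of class $C^{2k}$ in a neighborhood of the origin for every $k \in \N$. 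Proposition \ref{pro:Sato}(i), applied with $n = 2k$ and $\mu$ the distribution of $X_{\vsig}$, then yields $E|X_{\vsig}|^{2k} < \infty$.

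I do not anticipate a genuine obstacle here; the only delicate point is that differentiating $Z_S$ term by term produces logarithmic factors of the form $\log(\lambda_{l1}(n_1 + u_1) + \cdots + \lambda_{lr}(n_r + u_r))$, but these are exactly absorbed into the $O((n_1 + \cdots + n_r)^{\varepsilon})$ bound on the new coefficient function, precisely as in the estimates leading to \eqref{eq:esth''}. If a more hands-on argument is preferred, one may alternatively bound $|X_{\vsig}|^{2k}$ by a constant times $(\log(n_1 + \cdots + n_r + ru))^{2k}$ and re-run the convergence analysis in the proof of Theorem \ref{th:sc} with a slightly enlarged $\varepsilon$, since $(\log N)^{2k} = O(N^{\varepsilon})$ for every $\varepsilon > 0$; the series defining the expectation then converges by the same integral comparison as before.
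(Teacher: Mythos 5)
Your proof is correct and follows essentially the same route as the paper: invoke Theorem \ref{th:de} (iterated) to conclude that $f_{\vsig}$ is of class $C^{2k}$ near the origin, then apply Proposition \ref{pro:Sato}(i) with $n=2k$. The extra observations you add (that $\vsig+{\rm i}\vt$ stays in ${\rm D}_Z$, and the alternative direct estimate via $(\log N)^{2k}=O(N^{\ep})$) are sound but not needed beyond what the paper's own two-line proof records.
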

\begin{proof}
Let $f_{\vsig}$ be the characteristic function of $X_{\vsig}$.
By following the proof of Theorem \ref{th:de}, we have, for any $n\in\N$ and $\vt\in\rd$, $f_{\vsig}$ is of class $C^n$.
Thus by Proposition \ref{pro:Sato} $(i)$, for any $n=2k$, we obtain $E|X_{\vsig}|^{2k} <\infty.$
\end{proof}

We also have the following by Proposition \ref{pro:Sato} $(ii)$. 

\begin{theorem}\label{th:notinf}
Multidimensional Shintani zeta distributions with $f_{\vsig}$ having zeros in the region $\min_{1\le l\le m}\Re\langle \vc_l, \vs\rangle >r/m$ are not infinitely divisible. 
\end{theorem}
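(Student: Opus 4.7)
The plan is a one-line contrapositive application of Proposition \ref{pro:Sato} $(ii)$, which guarantees that every infinitely divisible characteristic function on $\rd$ is pointwise nonzero. Consequently, producing a single $\vt_0\in\rd$ at which $f_{\vsig}(\vt_0)=0$ suffices to conclude that $\mu_{\vsig}\notin I(\rd)$.

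Concretely, first I would unfold $f_{\vsig}$ via Theorem \ref{th:scf} as
\begin{equation*}
f_{\vsig}(\vt)=\frac{Z_S(\vsig+{\rm i}\vt)}{Z_S(\vsig)}.
\end{equation*}
Because the imaginary perturbation ${\rm i}\vt$ does not affect $\Re\langle\vc_l,\cdot\rangle$, the region condition $\min_{1\le l\le m}\Re\langle\vc_l,\vs\rangle>r/m$ evaluated along the vertical line $\vs=\vsig+{\rm i}\vt$ reduces to $\min_{1\le l\le m}\langle\vc_l,\vsig\rangle>r/m$, which is exactly the half-space of absolute convergence from Theorem \ref{th:sc}. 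In particular $Z_S(\vsig)$ is finite, is strictly positive (else $\mu_{\vsig}$ would not even be a probability measure, as already implicitly required in Definition \ref{DefSD}), and is independent of $\vt$. A hypothesized zero of $f_{\vsig}$ at some $\vt_0\in\rd$ is therefore equivalent to $Z_S(\vsig+{\rm i}\vt_0)=0$, which gives $\wh{\mu_{\vsig}}(\vt_0)=f_{\vsig}(\vt_0)=0$. Proposition \ref{pro:Sato} $(ii)$ then forces $\mu_{\vsig}\notin I(\rd)$.

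There is essentially no technical obstacle here: the entire theorem is a repackaging of Proposition \ref{pro:Sato} $(ii)$ via the identification $\wh{\mu_{\vsig}}=f_{\vsig}$ supplied by Theorem \ref{th:scf}. The substance of the statement is conceptual rather than computational, in that it isolates a purely number-theoretic criterion, namely the presence of zeros of $Z_S$ inside its region of absolute convergence, that directly obstructs infinite divisibility. This parallels the classical reasoning used by Khintchine for the Riemann zeta distribution on $\R$, and it suggests that the real difficulty in applying the theorem in concrete examples will be locating such zeros of particular Shintani zeta functions rather than in the argument itself.
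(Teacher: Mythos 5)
Your proposal is correct and follows exactly the paper's own (essentially one-line) argument: the paper derives Theorem \ref{th:notinf} directly from Proposition \ref{pro:Sato} $(ii)$ via the identification $\wh{\mu_{\vsig}}=f_{\vsig}$ from Theorem \ref{th:scf}. Your additional remarks on why the region condition is unaffected by the imaginary shift and why $Z_S(\vsig)>0$ are accurate elaborations of the same reasoning.
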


\begin{remark}
A similar distribution is also defined in the same way by the former definition of the multidimensional Shintani zeta function in \cite{AN11k}.
The region of $\vsig$ is expanded to $\min_{1\le l\le m}\langle \vc_l,\vsig \rangle >r/m$ from $-\ep +\sum_{l=1}^m\langle \vc_l,\vsig \rangle >r$ for some $\ep >0$.
Also Theorems \ref{th:scf} and \ref{th:notinf} are shown when $\vsig$ satisfies $-\ep +\sum_{l=1}^m\langle \vc_l,\vsig \rangle >r$ in \cite{AN11k}.
Since Theorem \ref{th:de} is a new result, Theorem \ref{th:mo} is completely new one.
\end{remark}

By Theorem \ref{th:notinf}, zeta distributions generated by following functions when they have zeros for some $\vsig$ are not infinitely divisible: 
\begin{enumerate}
\item Partial zeta functions $\sum_{n \le N} n^{-s}$ for some suitable integer $N$,
\item The derivative of the Riemann zeta function (\ref{eq:zedeir}), 
\item The Lerch transcendent function \eqref{eq:LT} proved in \cite{GG}.
\item Some Dirichlet series with periodic coefficients, which contains the Hurwitz zeta functions 
with $u \ne 1/2$ and $u$ are rational, treated by Saias and Weingartner \cite{SW}, 
\item Euler-Zagier-Hurwitz type of multiple zeta functions (\ref{eq:ezhdef}) 
when $u_1,\ldots ,u_r$ are algebraically independent over rationals proved in \cite[Proposition 3.2]{Nakamura4}. 
\end{enumerate}

\section{Relation with multidimensional polynomial Euler products}

In \cite{AN12e}, we have introduced multidimensional polynomial Euler products as to expand the Riemann zeta distribution to $\rd$-valued with infinite divisibility.
Its definition is as follows.

\begin{definition}[Multidimensional polynomial Euler product, $Z_E(\vs)$ (\cite{AN12e})]\label{def:EP}
Let $d,m\in\N$ and $\vs\in\mathbb{C}^d$.
For $-1 \le \alpha_l(p) \le 1$ and $\va_l \in {\mathbb{R}}^d$, $1\le l\le m$ and $p\in\Prime$,
we define multidimensional polynomial Euler product given by
\begin{equation}
Z_E (\vs) = \prod_p \prod_{l=1}^m \left( 1 - \alpha_l(p) p^{-\langle \va_l,\vs\rangle} \right)^{-1}.
\label{eq:def1}
\end{equation}
\end{definition}

Let ${\mathcal{Z}}_E$ be the set of functions of $Z_E$. 
Then we have the following.

\begin{theorem}\label{th:shieu1}
It holds that
$$
{\mathcal{Z}}_E \subset {\mathcal{Z}}_S.
$$ 
\end{theorem}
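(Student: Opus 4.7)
The plan is to expand the Euler product into a multiple Dirichlet series and then recognize that series as a Shintani zeta function for an appropriate choice of parameters.

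First, in the region of absolute convergence $\min_{1\le l\le m}\Re\langle \va_l,\vs\rangle > 1$, I would expand each local factor as a geometric series,
$$\bigl(1 - \alpha_l(p) p^{-\langle \va_l,\vs\rangle}\bigr)^{-1} = \sum_{k=0}^\infty \alpha_l(p)^k\, p^{-k\langle \va_l,\vs\rangle},$$
and then multiply out over all primes $p$ and all $l=1,\dots,m$. Using unique factorization $M_l = \prod_p p^{k_{l,p}}$ to re-index the $m$-tuples of exponent-sequences $(k_{l,p})_p$ by $(M_1,\dots,M_m)\in\N^m$, I would regroup the expansion into a multiple Dirichlet series
$$Z_E(\vs) = \sum_{M_1,\dots,M_m=1}^{\infty} \beta(M_1,\dots,M_m)\prod_{l=1}^m M_l^{-\langle \va_l,\vs\rangle},\q \beta(M_1,\dots,M_m) := \prod_{l=1}^m\prod_p \alpha_l(p)^{v_p(M_l)},$$
where $v_p$ denotes the $p$-adic valuation. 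The rearrangement is justified by absolute convergence of the Euler product, and the hypothesis $|\alpha_l(p)|\le 1$ immediately gives $|\beta(M_1,\dots,M_m)|\le 1$.

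Next, I would translate indices by setting $n_l := M_l-1$ and defining $\tht(n_1,\dots,n_m) := \beta(n_1+1,\dots,n_m+1)$, so that $|\tht|\le 1 = O((n_1+\cdots+n_m)^\ep)$ for any $\ep>0$, matching the growth condition in Definition 2.1. Choosing Shintani parameters $r=m$, $u_j=1$, $\vc_l=\va_l$, and $\lambda_{lj}=\delta_{lj}$, the Shintani denominator $\sum_j \lambda_{lj}(n_j+u_j)$ collapses to $n_l+1$ for each $l$, and one obtains $Z_E(\vs)=Z_S(\vs)$ exactly in the form of Definition 2.1. Hence $Z_E \in \mZ_S$.

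The main obstacle is reconciling the natural choice $\lambda_{lj}=\delta_{lj}$ with the strict positivity requirement $\lambda_{lj}>0$ in Definition 2.1. The same issue already arises implicitly in Example 2.1$(iii)$, so I would read the convention as allowing $\lambda_{lj}\ge 0$ provided each row contains a positive entry; the absolute-convergence argument of Theorem 2.1 still applies if one bounds the denominator below using only the nonzero $\lambda_{lj}$ in each row. An alternative but less transparent workaround would be to increase $r$ and use $\tht$ supported on a diagonal sub-lattice, but the diagonal identification above is by far the cleanest.
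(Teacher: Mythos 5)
Your proof is correct and follows essentially the same route as the paper: expand each local Euler factor geometrically, re-index by unique factorization to obtain the multiple Dirichlet series $\sum_{n_1,\dots,n_m}\prod_{l=1}^m A_l(n_l)\,n_l^{-\langle \va_l,\vs\rangle}$ with $|A_l(n_l)|\le 1$ (the paper invokes Steuding's lemma, quoted as Lemma 3.3, for exactly this coefficient description), and identify the result as a member of $\mathcal{Z}_S$. The $\lambda_{lj}=\delta_{lj}$ versus $\lambda_{lj}>0$ issue you flag is a genuine wrinkle in Definition 2.1, but it is equally present and silently glossed over in the paper's own proof and in its Example 2.2$(iii)$, so your reading of the convention is the intended one.
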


For the proof of this theorem, we use the following lemma.

\begin{lemma}[{\cite[Lemma 2.2]{Steuding1}}]\label{lm:A}
Suppose that a function $L(s)$ is given by 
$$
L(s)= \sum_{n=1}^\infty \frac{A(n)}{n^s} = \prod_p \prod_{l=1}^m \biggl( 1-\frac{\alpha_l(p)}{p^s} \biggr)^{-1} .
$$
Then $A(n)$ is multiplicative and
$$
A(n) = \prod_{p|n} \sum_{\substack{0 \le k_1, \ldots , k_m \\ k_1+ \cdots +k_m = \nu(n;p)}}
\prod_{l=1}^m \alpha_l(p)^{k_l},
$$
where $\nu(n;p)$ is the exponent of the prime $p$ in the prime factorization of the integer $n$. 
Moreover, if $|\alpha_l(p)| \le 1$ for $1 \le l \le m$ and all primes $p$, then $|A(n)| = O(n^{\varepsilon})$ for any $\varepsilon >0$, and vice versa.
\label{lem:st2.2}
\end{lemma}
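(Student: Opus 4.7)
The plan is to convert the Euler product $Z_E(\vs) = \prod_p \prod_{l=1}^m (1-\alpha_l(p)p^{-\langle\va_l,\vs\rangle})^{-1}$ into a Dirichlet-type multiple series that fits Definition \ref{Def} exactly, by expanding each local factor geometrically and regrouping the resulting terms by the multi-index of prime-power exponents.

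In the region of absolute convergence of the Euler product, I would expand each factor as $\sum_{k\ge 0}\alpha_l(p)^k p^{-k\langle\va_l,\vs\rangle}$, multiply across $1\le l\le m$ and across all primes, and reindex by $(n_1,\dots,n_m)\in\N^m$ with $n_l:=\prod_p p^{\kappa_{p,l}}$, where $\kappa_{p,l}$ records the exponent chosen from the $l$-th geometric series at $p$. Unique prime factorization makes the map $(\kappa_{p,l})_{p,l}\leftrightarrow(n_1,\dots,n_m)$ a bijection on finitely supported arrays, and absolute convergence justifies the rearrangement, giving
\[
Z_E(\vs) \;=\; \sum_{n_1,\dots,n_m=1}^{\infty} \frac{B(n_1,\dots,n_m)}{\prod_{l=1}^m n_l^{\langle \va_l,\vs\rangle}}, \qquad B(n_1,\dots,n_m) \;:=\; \prod_p \prod_{l=1}^m \alpha_l(p)^{\kappa_{p,l}}.
\]
This is the multivariate analogue of the Dirichlet-series identity in Lemma \ref{lm:A}, and $|\alpha_l(p)|\le 1$ from Definition \ref{def:EP} forces $|B(n_1,\dots,n_m)|\le 1$. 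To match Definition \ref{Def} I would then take $r=m$, $u_j=1$, $\vc_l=\va_l$, $\lambda_{lj}=\delta_{lj}$, and $\theta(n_1,\dots,n_m):=B(n_1+1,\dots,n_m+1)$; the Shintani linear forms collapse to $L_l=n_l+1$, the shift $n_l\mapsto n_l-1$ recovers the displayed expansion, and $|\theta|\le 1$ trivially meets the growth requirement $|\theta(n_1,\dots,n_r)|=O((n_1+\cdots+n_r)^{\ep})$ for every $\ep>0$.

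The main obstacle is the rigorous passage from the prime-indexed infinite product to the integer-indexed iterated sum, which I would handle by truncation: identify the coefficients of the partial Euler products over $p\le P$ using Lemma \ref{lm:A} (applied coordinate by coordinate, or after parametrizing $\vs$ along a generic ray to reduce to the one-variable form of the lemma), and then let $P\to\infty$ using the bound $|B|\le 1$ to dominate. A minor bookkeeping point is that the choice $\lambda_{lj}=\delta_{lj}$ is degenerate relative to the strict positivity $\lambda_{lj}>0$ in Definition \ref{Def}; this is handled either by reading that hypothesis as $\lambda_{lj}\ge 0$ (the conclusion of Theorem \ref{th:sc} still holds as long as each $n_j$ appears in at least one $L_l$) or by a small positive perturbation absorbed into $\theta$. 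Neither option affects the coefficient identification, and the inclusion $\mathcal{Z}_E\subset\mathcal{Z}_S$ follows.
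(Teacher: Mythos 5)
Your proposal proves the wrong statement. What is asked for is Lemma \ref{lm:A} itself (Steuding's coefficient lemma, which the paper quotes from \cite{Steuding1} without reproving): the identity $A(n)=\prod_{p\mid n}\sum_{k_1+\cdots+k_m=\nu(n;p)}\prod_{l=1}^m\alpha_l(p)^{k_l}$ for the coefficients of the \emph{single-variable} Dirichlet series $\sum_n A(n)n^{-s}$, together with the equivalence between $|\alpha_l(p)|\le 1$ and $|A(n)|=O(n^{\varepsilon})$. What you prove instead is essentially Theorem \ref{th:shieu1}, the inclusion $\mathcal{Z}_E\subset\mathcal{Z}_S$ --- and your argument explicitly \emph{invokes} Lemma \ref{lm:A} (``identify the coefficients of the partial Euler products using Lemma \ref{lm:A}''), so read as a proof of the lemma it is circular. (As a proof of Theorem \ref{th:shieu1}, your expansion-and-reindexing is close to the paper's own: the paper expands each factor $l$ separately into $\sum_n A_l(n)n^{-\langle\va_l,\vs\rangle}$ with $A_l(n)=\prod_{p\mid n}\alpha_l(p)^{\nu(n;p)}$ and multiplies the $m$ resulting series, which is exactly your $B(n_1,\dots,n_m)=\prod_l A_l(n_l)$.)

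Three pieces of the lemma are genuinely missing from your write-up. First, the local coefficient formula and multiplicativity: multiplying the $m$ geometric series at a fixed prime $p$ and collecting the coefficient of $p^{-\nu s}$ yields the complete homogeneous symmetric polynomial $\sum_{k_1+\cdots+k_m=\nu}\prod_l\alpha_l(p)^{k_l}$, and multiplicativity of $A$ then follows from unique factorization across distinct primes; you never collapse your $m$-fold index $(n_1,\dots,n_m)$ back to the single index $n=n_1\cdots n_m$, which is precisely where this formula lives. Second, the growth bound: your estimate $|B(n_1,\dots,n_m)|\le 1$ concerns the separated coefficients, not $A(n)=\sum_{n_1\cdots n_m=n}B(n_1,\dots,n_m)$, which is a sum of $d_m(n)$ such terms and is \emph{not} bounded (e.g.\ $A=d_k$ for $\zeta^k$, as in the paper's Example 3.4); the correct route is $|A(n)|\le\prod_{p\mid n}\binom{\nu(n;p)+m-1}{m-1}\le d(n)^{m-1}=O(n^{\varepsilon})$. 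Third, the ``vice versa'': if some $|\alpha_l(p)|>1$, the local generating function $\sum_\nu A(p^\nu)x^\nu=\prod_l(1-\alpha_l(p)x)^{-1}$ has radius of convergence strictly less than $1$, so $|A(p^\nu)|$ grows geometrically along a subsequence of $\nu$ and cannot be $O((p^\nu)^{\varepsilon})$ for small $\varepsilon$. None of these points is addressed in the proposal.
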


\begin{proof}[Proof of Theorem \ref{th:shieu1}]
We have, for any $\vs\in\mathbb{C}^d$ satisfying $\min_{1\le l\le m}\Re \langle \va_l,\vs\rangle >1$ and $1\le l\le m$,
$$
\prod_{p} \bigl( 1 - \alpha_{l}(p) p^{-\langle \va_l,\vs\rangle} \bigr)^{-1} =
\prod_{p} \biggl( 1 + \sum_{k=1}^\infty \frac{\alpha_{l}(p)^k}{p^{k\langle \va_l,\vs\rangle}} \biggr) =
\sum_{n=1}^{\infty} \frac{A_l(n)}{n^{\langle \va_l,\vs\rangle}}, 
$$
where
$$
A_l (n)= \prod_{p|n} \alpha_{l}(p)^{\nu(n;p)}.
$$
This equality implies Lemma \ref{lm:A} with $m=1$.
Note that $|A_l(n)| \le 1$ since $-1 \le \alpha_{l}(p) \le 1$. 
Thus we have, for any $\vs\in\mathbb{C}^d$ with $\min_{1\le l\le m}\Re \langle \va_l,\vs\rangle >1$,
$$
\prod_p \prod_{l=1}^m \bigl( 1 - \alpha_{l}(p) p^{-\langle \va_l,\vs\rangle} \bigr)^{-1} =
\prod_{l=1}^m \sum_{n_l=1}^{\infty} \frac{A_l(n_l)}{n_l^{\langle \va_l,\vs\rangle}} =
\sum_{n_1, \ldots ,n_m=1}^\infty
\frac{A_1(n_1)}{n_1^{\langle \va_1,\vs\rangle}} \cdots \frac{A_m(n_m)}{n_m^{\langle \va_m,\vs\rangle}}.
$$
Obviously, we have $\prod_{l=1}^m |A_l(n_l)| \le 1$ and $\left|\prod_{l=1}^m |A_l(n_l)|\right|= O((n_1+ \cdots +n_r)^{\ep})$ for any $\ep >0$.
Therefore 
$$
\mathcal{Z}_E\ni\prod_p \prod_{l=1}^m \bigl( 1 - \alpha_{l}(p) p^{-\langle \va_l,\vs\rangle} \bigr)^{-1}=
\sum_{n_1, \ldots ,n_m=1}^\infty \frac{A_1(n_1)}{n_1^{\langle \va_1,\vs\rangle}} \cdots \frac{A_m(n_m)}{n_m^{\langle \va_m,\vs\rangle}}\in \mathcal{Z}_S.
$$
\end{proof}

In the next, we consider this relation by treating some simple zeta functions.

\begin{example}
Let $d_k (n)$, $k=2,3,4,\ldots ,$ denote the number of ways of expressing $n$ as a product of $k$ factors, expression with the same factors in a different order being counted as different. 
Then we have
$$
\prod_p \bigl( 1 - p^{-s} \bigr)^{-k} = \zeta^k (s) = 
\sum_{m_1=1}^\infty \frac{1}{m_1^s} \cdots \sum_{m_k=1}^\infty \frac{1}{m_k^s} =
\sum_{n=1}^\infty \frac{1}{n^s} \sum_{m_1 \cdots m_k=n} 1 = \sum_{n=1}^\infty \frac{d_k(n)}{n^s}
$$
where $\Re (s)>1$ (see, for example \cite[(1.2.2)]{Tit}). 
\end{example}

By applying Lemma \ref{lem:st2.2}, we can give another example of Shintani zeta distribution on $\R$ related to number theory.
As to give it, we use the following well-known function.

\begin{definition}[Dedekind zeta function of $\mathbb{Q}({\rm i})$ (see, e.g.\,\cite{Cohen})]
Let $\mathbb{Q}({\rm i})$ be a quadratic field of discriminant $-1$.
The Dedekind zeta function of $\mathbb{Q}({\rm i})$ is a function of a complex variables $s=\sigma +{\rm i}t$, for $\sigma >1$ given by
$$
\zeta_{{\mathbb{Q}}({\rm i})} (s) := \zeta (s) L(s), 
$$
where
\begin{equation}\label{f:L}
L(s) := \sum_{n=1} \frac{\chi_{-4}(n)}{n^s}, \qquad \chi_{-4}(n) :=
\begin{cases}
1 & n \equiv 1 \mod 4,\\
-1 &  n \equiv 3 \mod 4,\\
0  & n \equiv 0,2 \mod 4.
\end{cases}
\end{equation}
\end{definition}

\vskip3mm

Now we have the following.

\begin{example}\label{ex:5}
The Dedekind zeta function generates a characteristic function which belongs to the multidimensional Shintani zeta distribution. 
\end{example}

\begin{proof}

This function is closely related to number theory. 
By the definition, we have
$$
\zeta_{{\mathbb{Q}}({\rm i})} (s) = \sum_{m,n=1} \frac{\chi_{-4}(n)}{m^sn^s}.
$$
Thus we can not see whether this function generates a distribution in the sense of Shintani or not in this way. 
However, it is known that (see, for example \cite[p.~221]{Cohen})
$$
\zeta_{{\mathbb{Q}}({\rm i})} (s) = \frac{1}{4} \sum_{(m,n) \in {\mathbb{Z}}^2 \setminus (0,0)} \frac{1}{(m^2+n^2)^s} = 
\sum_{n=1} \frac{A(n)}{n^s},
$$
where $A(n)$ is nonnegative definite coefficient given by
$$
A(n) := \frac{1}{4} \# \{ (m_1,m_2) \in {\mathbb{Z}}^2 : m_1^2+m_2^2 =n\} = \sum_{d \mid n} \chi_{-4}(d) .
$$
Moreover, we obtain $A(n) = O(n^{\varepsilon})$ by Lemma \ref{lem:st2.2}. 
Therefore now we can see that $\zeta_{{\mathbb{Q}}({\rm i})} (s)$ generates a distribution in the sense of Shintani. 
\end{proof}

\begin{remark}
We have also shown that the Dedekind zeta function also generates a multidimensional compound Poisson characteristic function in view of the multidimensional polynomial Euler products.
(See, \cite[Example 4.2 (i)]{AN12e}.)
It should be noted that this function is one of the rare case we can show that the normalized function belongs to both multidimensional Shintani zeta distribution and compound Poisson distribution generated by the multidimensional polynomial Euler products since it is difficult to obtain positive $A(n)$ for general zeta functions. 
\end{remark}

Throughout this section, we have considered the relation between series representations and Euler products of multivariable zeta functions.
We also have noted that Shintani zeta distributions contain binomial and Piosson distributions as in Example \ref{ex:simp}.
Though, it is still difficult to treat zeta distributions on $\rd$ only by series representations.
As mentioned, we have introduced infinitely divisible zeta distributions on $\rd$ by Euler products in \cite{AN12e}.
However, they also include products which generate not infinitely divisible $\rd$-valued characteristic functions and not even to generate 
characteristic functions.
To obtain more detail of behaviors of multivariable zeta functions in this view, 
we have studied them by treating multivariable finite Euler products as a simple case in \cite{AN12q}.

 

\begin{thebibliography}{1}
\bibitem{AN12q}
{\rm Aoyama,~T. \and Nakamura,~T.} (2012), Behaviors of multivariable finite Euler products in probabilistic view, submitted {http://arxiv.org/abs/1204.4043}.

\bibitem{AN12e}
{\rm Aoyama,~T. \and Nakamura,~T.} (2012), Multidimensional polynomial Euler products and infinitely divisible distributions on $\rd$, preprint {http://arxiv.org/abs/1204.4041}.

\bibitem{AN11k}
{\rm Aoyama,~T. \and Nakamura,~T.} (2011), Zeros of zeta functions and zeta distributions on $\rd$, to appear in 
{\it Functions in Number Theory and Their Probabilistic Aspects -- Kyoto 2010}.
\bibitem{Apo}
{\rm Apostol,~T.~M.} (Undergraduate Texts in Mathematics, 1976), Introduction to Analytic Number Theory, Springer. 

\bibitem{Ba} 
{\rm Barnes,~E.~W.} (1904), On the theory of multiple gamma function, {\it Trans.~Cambridge Phil.~Soc.} {\bf 19}, 374--425.

\bibitem{Ca} 
{\rm Cassou-Nogu\`es,~P.} (1979), Valeurs aux entiers n\'egatifs des fonctions z\^eta et fonctions z\^eta p-adiques, {\it Invent.~Math.} {\bf 51}, 29--59.

\bibitem{Cohen} 
{\rm Cohen,~H.} (2007), Number theory. Vol.~II. Analytic and modern tools, Graduate Texts in Mathematics, 240, Springer, New York. 

\bibitem{GG}
{\rm Garunk\v{s}tis,~R. \and Grigutis~A.} (2012), Zeros of the Lerch Transcendent Function, {\it{Math.~Model.~Anal.}} {\bf{17}}, no.~2, 245--250. 

\bibitem{GK68}
{\rm Gnedenko,~B.~V. \and Kolmogorov,~A.~N.} (1968), Limit Distributions for Sums of Independent Random Variables (Translated from the Russian by
Kai Lai Chung), Addison-Wesley.

\bibitem{Hida} 
{\rm Hida,~H.} (Student Texts, 1993), Elementary Theory of $L$-functions and Eisenstein Series, 26, London Math.~Soc. Cambridge.

\bibitem{Hu06}
{\rm Hu,~C.-Y., Iksanov,~A.~M., Lin,~G.~D. \and Zakusylo,~O.~K.} (2006), The Hurwitz zeta distribution, {\it Aust. N.~Z.~J.~Stat.} {\bf 48}, 1--6.

\bibitem{Imai} 
{\rm Imai,~H.} (1981), On the construction of $p$-adic $L$-functions, {\it Hokkaido Math.~J.} {\bf 10}, 249--253.

\bibitem{Khi}
{\rm Khinchine,~A.~Ya.} (1938), Limit Theorems for Sums of Independent Random Variables (in Russian), Moscow and Leningrad.

\bibitem{Lin}
{\rm Lin,~G.~D. \and Hu,~C.-~Y.} (2001), The Riemann zeta distribution, {\it Bernoulli} {\bf 7}, 817--828.

\bibitem{Masmz} 
{\rm K.~Matsumoto}, `Analytic theory of multiple zeta-functions and its applications', {\em Sugaku Expositions. Sugaku Expositions } 23 no.~2 (2010) 143--167.

\bibitem{Nakamura4}
{\rm Nakamura,~T.} (2009), Zeros and the universality for the Euler-Zagier Hurwitz type of multiple zeta-functions, {\it Bull.~Lond.~Math.~Soc.} {\bf 41}, 691--700.

\bibitem{SW} 
{\rm Saias,~E. \and Weingartner,~A.} (2009), Zeros of Dirichlet series with periodic coefficients, {\it Acta Arith.} {\bf 140}, 335--344. 

\bibitem{S99} 
{\rm Sato,~K.} (1999), L\'evy Processes and Infinitely Divisible Distributions, Cambridge University Press.

\bibitem{Steuding1} 
{\rm Steuding,~J.} (Lecture Notes in Mathematics, 2007), Value-Distribution of L-functions, 1877, Springer, Berlin.

\bibitem{Tit}
{\rm Titchmarsh,~E.~C.} (1986), The theory of the Riemann zeta-function, Second edition, Edited and with a preface by D.~R.~Heath-Brown, The Clarendon Press, Oxford University Press, New York. 

\end{thebibliography}
\end{document}